\newcommand{\strt}[1][1.7]{\vrule width0pt height0pt depth#1pt}
\newtheorem{theorem}{Theorem}
\newtheorem{lemma}[theorem]{Lemma}
\newtheorem{corollary}[theorem]{Corollary}
\newtheorem{proposition}[theorem]{Proposition}
\newtheorem{definition}[theorem]{Definition}
\newtheorem{example}[theorem]{Example}
\newcommand{\Z}{\mathbb{Z}}
\newcommand{\card}{\mathrm{card}}
\newcommand{\T}{\mathcal{T}}
\newcommand{\lang}{\begin{picture}(5,7)
\put(1.1,2.5){\rotatebox{45}{\line(1,0){6.0}}}
\put(1.1,2.5){\rotatebox{315}{\line(1,0){6.0}}}
\end{picture}}
\newcommand{\rang}{\begin{picture}(5,7)
\put(.1,2.5){\rotatebox{135}{\line(1,0){6.0}}}
\put(.1,2.5){\rotatebox{225}{\line(1,0){6.0}}}
\end{picture}}
\begin{document}

\begin{center}
\texttt{Comments, corrections, and related references welcomed,
as always!}\\[.5em]
{\TeX}ed \today
\vspace{2em}
\end{center}

\title{On group topologies determined by families of sets}
\thanks{This preprint is readable online at
\url{http://math.berkeley.edu/~gbergman/papers/}.
}

\subjclass[2010]{Primary: 22A05, 
Secondary: 03E04, 54A20.}
\keywords{topology on a group; co-$\!\kappa\!$ filter on a set}

\author{George M. Bergman}
\address{University of California\\
Berkeley, CA 94720-3840, USA}
\email{gbergman@math.berkeley.edu}

\begin{abstract}
Let $G$ be an abelian group, and $F$ a downward directed family
of subsets of $G.$
In \cite{P+Z}, I.\,Protasov and E.\,Zelenyuk describe the finest
group topology $\T$ on $G$ under which $F$ converges to $0;$
in particular, their description yields a criterion for $\T$ to be
Hausdorff.
They then show that if $F$ is the filter of cofinite subsets
of a countable subset $X\subseteq G$
(the Fr\'{e}chet filter on $X),$ then
there is a simpler criterion: $\T$
is Hausdorff if and only if for every $g\in G-\{0\}$ and
positive integer $n,$ there is an $S\in F$ such
that $g$ does not lie in the $\!n\!$-fold sum $n\,(S\cup\{0\}\cup -S).$

In this note, their proof is adapted to a larger class of families $F.$
In particular, if $X$ is any infinite subset of $G,$ $\kappa$
any regular infinite cardinal $\leq\card(X),$ and $F$ the set of
complements in $X$ of subsets of cardinality $<\kappa,$ then the above
criterion holds.

We then give some negative examples, including a countable
downward directed set $F$ of subsets of $\Z$ not
of the above sort, which satisfies
the ``$g\notin n\,(S\cup\{0\}\cup -S)$'' condition, but does not
induce a Hausdorff topology.

We end with a version of our main result for noncommutative $G.$
\end{abstract}
\maketitle

\section{Introduction}\label{S.intro}

Let $G$ be a group, let $F$ be a set of subsets of $G$ which is
downward directed, i.e., such that whenever $S_1,\,S_2\in F,$
there is an $S_3\in F$ which is contained in $S_1\cap S_2,$
and let $\T$ be a {\em group topology} on $G;$
that is, a (not necessarily Hausdorff) topology under which
the group multiplication and inverse operation are continuous.
We say that $F$ converges to an element $x\in G$ under $\T$
if every $\!\T\!$-neighborhood of $x$ contains a member of~$F.$

Given $G$ and $F,$ it is not hard to show that there
will exist a finest group topology $\T_F$
on $G$ under which $F$ converges to the identity element of $G.$
The explicit description of $\T_F$ is simpler and easier to study
for abelian $G$ than for general $G,$ so we shall assume
for most of this note that
\begin{equation}\begin{minipage}[c]{35pc}\label{d.abelian}
$G$ is abelian, with operations written additively.
\end{minipage}\end{equation}

To describe the topology $\T_F,$ let us set up some notation.
For any subset $S\subseteq G,$ let
\begin{equation}\begin{minipage}[c]{35pc}\label{d.S^*}
$S^*\ =\ S\cup\{0\}\cup -S.$
\end{minipage}\end{equation}
For any sequence of subsets $S_0,\,S_1,\,\dots\subseteq G$
indexed by the set $\omega$ of natural numbers, let
\begin{equation}\begin{minipage}[c]{35pc}\label{d.U}
$U(S_0,S_1,\dots)\ =\ \bigcup_{n\in\omega}\,\sum_{i<n}\,S_i^*
\ =\ \{x_0+\dots+x_{n-1}\mid n\in\omega,\ x_i\in S_i^*\}.$
\end{minipage}\end{equation}
(The $n=0$ term of the above union, i.e., the sum of the vacuous
sequence of sets, is understood to be $\{0\}.)$
Then one has
\begin{equation}\begin{minipage}[c]{35pc}\label{d.U-basis}
\cite[Lemma 2.1.1]{P+Z}\ \ The sets $U(S_0,S_1,\dots),$ as
$(S_i)_{i\in\omega}$
runs over all sequences of elements of $F,$ form a basis of open
neighborhoods of $0$ under $\T_F,$ the finest group topology
on $G$ under which $F$ converges to $0.$
\end{minipage}\end{equation}

Thus, as noted in \cite[Theorem~2.1.3]{P+Z}, the topology $\T_F$ is
Hausdorff (equivalently, there exists a Hausdorff group topology
under which $F$ converges to $0)$ if and only if
\begin{equation}\begin{minipage}[c]{35pc}\label{d.capU=0}
$\bigcap_{S_0,\,S_1,\,\dots\,\in F}\,U(S_0,\,S_1,\,\dots\,)\ =\ \{0\}.$
\end{minipage}\end{equation}

(Our formulations of these statements are different from those in
\cite{P+Z} because there, group topologies are by definition
Hausdorff.
Though Hausdorff topologies are the interesting ones, we find it
convenient, for making statements like~(\ref{d.U-basis}),
to allow non-Hausdorff topologies as well.
Incidentally, a topological group is Hausdorff if and
only if it is $\mathrm{T}_0$
\cite[p.32, Proposition~4 and preceding Exercise]{PJH}.)

From the fact that~(\ref{d.capU=0}) is necessary and sufficient
for $\T_F$ to be Hausdorff, we get a weaker condition which is
necessary.

\begin{corollary}\label{C.cupcap=0}
A \emph{necessary} condition for the topology $\T_F$ to be Hausdorff is
\begin{equation}\begin{minipage}[c]{35pc}\label{d.cupcap=0}
$\bigcup_{n>0}\,\bigcap_{S\in F_{\strt}}\,n\,S^*\ =\ \{0\}.$
In other words, for every $g\in G-\{0\}$ and every $n>0,$
there exists $S\in F$ with $g\notin n\,S^*.$
\end{minipage}\end{equation}
\end{corollary}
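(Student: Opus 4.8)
The plan is to derive~(\ref{d.cupcap=0}) directly from~(\ref{d.capU=0}) by specializing the latter to constant sequences. The key observation is that for a single set $S\in F$ and a fixed $n>0$, if we take the sequence $S_0=S_1=\dots=S_{n-1}=S$ (the choice of $S_i$ for $i\geq n$ being irrelevant, but we may as well take them all equal to $S$), then the $n$-th term of the union defining $U(S,S,\dots)$ is exactly $\sum_{i<n}S^*=n\,S^*$. Hence $n\,S^*\subseteq U(S,S,\dots)$ for every $S\in F$ and every $n>0$.

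From this containment the corollary follows by pure set theory. Fix $g\in G-\{0\}$; I must produce $n>0$ and $S\in F$ with $g\notin n\,S^*$, i.e., show that $g\notin\bigcap_{S\in F}n\,S^*$ for some $n$. Suppose not: then for every $n>0$ we have $g\in\bigcap_{S\in F}n\,S^*$, so in particular $g\in n\,S^*\subseteq U(S,S,\dots)$ for every $S\in F$ and every $n$. But then $g$ lies in $U(S,S,\dots)$ for every $S\in F$, hence a fortiori $g\in\bigcap_{S_0,S_1,\dots\in F}U(S_0,S_1,\dots)$, contradicting~(\ref{d.capU=0}), which by~(\ref{d.U-basis}) holds because $\T_F$ is assumed Hausdorff. (One should also check the reverse inclusion ``$\supseteq$'' in~(\ref{d.cupcap=0}), namely that $0$ lies in the left-hand side; this is immediate since $0\in S^*$ for every $S$, so $0\in n\,S^*$ for all $n,S$, whence $0$ is in the intersection, and taking e.g.\ $n=1$ puts it in the union.)

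There is essentially no obstacle here — the content is entirely the inclusion $n\,S^*\subseteq U(S,S,\dots)$, which is immediate from the definition~(\ref{d.U}) once one notices that constant sequences are allowed. The only mild subtlety is bookkeeping: the union in~(\ref{d.cupcap=0}) is $\bigcup_{n>0}\bigcap_{S\in F}$, whereas the intersection in~(\ref{d.capU=0}) ranges over sequences, so one must be careful that ``for every $S$ and every $n$'' corresponds correctly to ``$g$ fails to escape for any $n$''. Writing the argument contrapositively, as above — assume $g\neq 0$ lies in every $n\,S^*$, deduce $g$ lies in every $U(S,S,\dots)$, contradict Hausdorffness — makes the logic transparent and avoids any quantifier confusion.
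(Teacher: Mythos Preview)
Your argument has a genuine gap at the ``a fortiori'' step, and a related quantifier slip just before it.

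First the quantifier slip: you write that you must show $g\notin\bigcap_{S\in F}n\,S^*$ for \emph{some} $n$, but the condition~(\ref{d.cupcap=0}) requires this for \emph{every} $n>0$. Accordingly, the correct contrapositive hypothesis is only that there exists a single $n_0$ with $g\in\bigcap_{S\in F}n_0\,S^*$, not that this holds for all $n$.

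More seriously, the step ``$g\in U(S,S,\dots)$ for every $S\in F$, hence a fortiori $g\in\bigcap_{S_0,S_1,\dots\in F}U(S_0,S_1,\dots)$'' goes the wrong way. Constant sequences form a \emph{subfamily} of all sequences, so the intersection over all sequences is \emph{contained in} the intersection over constant sequences, not the reverse. Concretely, there is no reason a set $U(S,S,\dots)$ should sit inside a given $U(S_0,S_1,\dots)$: that would need $S\subseteq S_i$ for all $i$, and $F$ is only finitely downward directed. (For instance, with $F$ the open intervals about $0$ in $\mathbb{R}$, every $U(S,S,\dots)$ equals $\mathbb{R}$, while suitable non-constant sequences give bounded $U(S_0,S_1,\dots)$.)

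The missing ingredient is precisely downward directedness, used for a \emph{fixed} $n$. Given any sequence $(S_i)$ and the (correct) contrapositive hypothesis $g\in n_0\,S^*$ for all $S$, choose $S\in F$ below $S_0,\dots,S_{n_0-1}$; then $g\in n_0\,S^*\subseteq S_0^*+\dots+S_{n_0-1}^*\subseteq U(S_0,S_1,\dots)$, and now the contradiction with~(\ref{d.capU=0}) is legitimate. Rewritten directly rather than contrapositively, this is exactly the paper's proof.
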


\begin{proof}
Assuming~(\ref{d.capU=0}), consider any $g\in G-\{0\}$ and any $n>0.$
By~(\ref{d.capU=0})
we can choose $S_0, S_1,\dots$ such that $g\notin U(S_0,\,S_1,\,\dots).$
Hence $g$ fails to lie in the smaller set $S_0^*+\dots+S_{n-1}^*.$
Letting $S$ be a common lower bound for $S_0,\dots,S_{n-1}$ in
the downward directed set $F,$ $g$ therefore
fails to lie in $n\,S^*,$ as required.
\end{proof}

As illustrated by the notation $G-\{0\}$ above, in
this note a ``$-$'' between sets indicates {\em relative complement};
thus, $X-Y$ never denotes $X+(-Y).$

In \S\ref{S.eg}, we shall see by example
that~(\ref{d.cupcap=0}) is not in general \emph{sufficient}
for $\T_F$ to be Hausdorff.
However, Protasov and Zelenyuk \cite[Theorem 2.1.4]{P+Z}
show that it \emph{is} sufficient if $F$ is the
filter of cofinite subsets of a countable subset
$X=\{x_0,\,x_1,\,\dots\,\}\subseteq G$
(often called the Fr\'{e}chet filter on $X);$
in other words, if $\T_F$
is the finest group topology on $G$ making $\lim_{i\to\infty} x_i=0.$
Generalizing their argument, we shall obtain below
the same result for a wider class of $F.$
In \S\ref{S.nonab} we shall extend this result to nonabelian $G.$

\section{Co-$\!\kappa\!$ filters, and a peculiar condition that they satisfy}\label{S.peculiar}

Here is our generalization of
the class of filters considered in~\cite{P+Z}.

\begin{definition}[{\cite[Example~II.2.5]{E+M}}]\label{D.co-kappa}
Let $X$ be an infinite set and $\kappa$ an infinite cardinal
$\leq\card(X).$
Then by the \emph{co-$\!\kappa\!$ filter} on $X$
we shall mean the \textup{(}downward directed\textup{)}
set of complements in $X$ of subsets with cardinality $<\kappa.$
For $\kappa=\aleph_0,$ this will be called the \emph{cofinite}
filter on $X.$
\end{definition}

(Remark:  The cofinite filter on an infinite set $X$
is often called the Fr\'{e}chet filter on $X.$
In some places, the co-$\!\card(X)\!$ filter on $X$ has been called
the ``generalized Fr\'{e}chet filter'';
in \cite[p.197]{UA} the term ``Fr\'{e}chet filter''
is used, instead, for the latter construction.)

To state the property of these filters that we
will use, we make the following definition.
It has the same form as the
definition of convergence of a family of points
under a group topology on $G,$ but with the system of
neighborhoods of $0$ replaced by a more general family.

\begin{definition}\label{D.strong}
Suppose $F$ is any downward directed family of subsets of the abelian
group $G,$ and $(x_i)_{i\in I}$ a family of elements of $G$ indexed by a
downward directed partially ordered set $I.$
We shall say that $(x_i)_{i\in I}$ ``converges strongly'' to
an element $x\in G$ with respect to $F$ if for every $S\in F,$
there exists $i\in I$ such that for all $j\leq i,$ $x_j-x\in S^*.$
\end{definition}

(Since $F$ is not assumed to be a neighborhood basis of a
group topology, this is not a very natural condition.
I use the modifier ``strongly'' because the condition is stronger than
convergence in the group topology \emph{determined}
by $F$ as in~(\ref{d.U-basis}).
Note, incidentally, that the way in which the ordering
on $I$ is used in Definition~\ref{D.strong} is the reverse of the usual.
This is not essential; it simply spares us introducing the opposite
of a natural ordering below.
Indeed, when an index set $I$ is
described as \emph{downward} rather than upward directed,
it is natural to adjust accordingly what one understands
convergence of an $\!I\!$-indexed family to mean.)

We can now state the condition around which our main result will center.

\begin{definition}\label{D.self-indulgent}
A downward directed family $F$ of subsets of the abelian group $G$
will be called \emph{self-indulgent} if for
every $S\in F,$ and every family $(x_T)_{T\in F'}$ of elements
of $S^*$ indexed by a downward cofinal
subset $F'\subseteq F,$ there exist an $x\in S^*,$
and a downward cofinal subset $F''\subseteq F',$ such that
$(x_T)_{T\in F''}$ converges strongly to $x$ with respect to $F.$
\end{definition}

A strange feature of this condition (which motivates its name)
is that it involves the family $F$ in three unrelated ways:
First, $S$ is taken to be a member of $F;$ second, the
family of points $x_T\in S^*$ is indexed by a
subfamily of $F,$ and third, the convergence asked for is strong
convergence with respect to $F.$

\begin{lemma}\label{L.Fr-self-ind}
Let $X$ be any infinite subset of the abelian group $G,$
and $\kappa$ any regular infinite cardinal $\leq\card(X).$
Then the co-$\!\kappa\!$ filter $F$ on $X$ is self-indulgent
as a family of subsets of $G.$
\end{lemma}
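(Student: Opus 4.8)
The plan is to trace where the values $x_T\in S^*$ accumulate and to produce the required $x$ and $F''$ by a short case analysis, after first isolating the combinatorial facts that make a co-$\!\kappa\!$ filter special. The key features of $F$ I would record are: (i) since $\kappa$ is regular, a union of fewer than $\kappa$ subsets of $X$, each of cardinality $<\kappa$, again has cardinality $<\kappa$, so the intersection of fewer than $\kappa$ members of $F$ again lies in $F$ --- that is, $F$ is downward $\!\kappa\!$-directed; and (ii) the same then holds for every downward cofinal subfamily $F_0\subseteq F$ (intersect inside $F$, then descend into $F_0$), and for a subset $C\subseteq F_0$ the phrases ``$C$ is downward cofinal in $F_0$'' and ``$C$ is downward cofinal in $F$'' are equivalent. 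From these I would extract the lemma that drives everything: \emph{in a downward $\!\kappa\!$-directed poset, a union of fewer than $\kappa$ subsets, none of them downward cofinal, is again not downward cofinal} --- for each piece pick an element below which no member of that piece lies, take a common lower bound $v$ of these fewer than $\kappa$ elements, and note that no member of the union lies below $v$. In particular this covers unions of two pieces.

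Now fix $S\in F$ and a family $(x_T)_{T\in F'}$ with each $x_T\in S^*=S\cup\{0\}\cup -S\subseteq X\cup\{0\}\cup -X$. \emph{Case 1:} $\{T\in F':x_T=0\}$ is downward cofinal; take $x=0$ and $F''$ equal to this set, and strong convergence is trivial since $x_T-x=0$ throughout. If Case 1 fails, there is $T_1\in F'$ with $x_T\ne 0$ whenever $T\in F'$ and $T\subseteq T_1$; replacing $F'$ by the (still downward cofinal) set $\{T\in F':T\subseteq T_1\}$ we may assume every $x_T\in S\cup -S$. This being a union of two pieces, one of $\{T:x_T\in S\}$ and $\{T:x_T\in -S\}$ is downward cofinal; replacing each $x_T$ by $-x_T$ in the second case --- harmless since $S^*$ is symmetric and strong convergence survives negation --- we obtain a downward cofinal $F'_2\subseteq F'$ with $x_T\in S\subseteq X$ for every $T\in F'_2$. \emph{Case 2:} some value $a_0$ has $\{T\in F'_2:x_T=a_0\}$ downward cofinal; take $x=a_0\in S\subseteq S^*$ and $F''$ this set, with strong convergence again trivial. \emph{Case 3:} no value has a downward cofinal fibre in $F'_2$. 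Then every fibre $\{T\in F'_2:x_T=a\}$ ($a\in X$, empty ones included) is not downward cofinal in $F'_2$, so by the lemma, for every $Z\subseteq X$ with $\card(Z)<\kappa$ the set $\{T\in F'_2:x_T\in Z\}$ is not downward cofinal in $F'_2$; that is, there is $T_0\in F'_2$ with $x_T\notin Z$ for all $T\in F'_2$ satisfying $T\subseteq T_0$. Take $x=0$ and $F''=F'_2$: given $S'\in F$, write $S'=X-Z$ with $\card(Z)<\kappa$, pick the corresponding $T_0$, and observe that for $T\subseteq T_0$ in $F'_2$ we have $x_T\in S\cap(X-Z)\subseteq(S')^*$, hence $x_T-0\in(S')^*$. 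So $(x_T)_{T\in F'_2}$ converges strongly to $0$. In every case $x\in S^*$ and $F''$ is downward cofinal in $F$, hence in $F'$.

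The one delicate step is Case 3. The obvious idea --- thin $F'_2$ to a downward cofinal $F''$ on which $T\mapsto x_T$ is injective, then take $x=0$ --- works painlessly when $X$ is countable, but collapses once $\card(X)>\kappa$: the cofinality of $([X]^{<\kappa},\subseteq)$ can exceed $\kappa$, and a transfinite recursion building such an $F''$ can exhaust all the values available below some node before it finishes. The escape is not to thin at all: downward $\!\kappa\!$-directedness of $F'_2$ already forces the preimage of every set of size $<\kappa$ to be non-cofinal, so $F''=F'_2$ serves. This is precisely where regularity of $\kappa$ is genuinely needed --- it is what makes fact (i), and hence the lemma, true; the rest of the argument is soft.
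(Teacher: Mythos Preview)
Your proof is correct and rests on the same dichotomy as the paper's: either some value of the family is ``frequent'' (its fibre is downward cofinal), in which case a constant subfamily gives trivial strong convergence, or no value is frequent, in which case one takes $x=0$, keeps the whole index family, and uses regularity of $\kappa$ to avoid the fewer-than-$\kappa$ bad values obstructing membership in each target $R^*.$ The paper carries this out in exactly two cases, working throughout with $S^*$ rather than $X$: for each $R\in F$ it notes $\card(S^*-R^*)<\kappa,$ picks for each $s\in S^*-R^*$ a $T_s\in F'$ below which $s$ is never attained, and intersects these $T_s.$ Your extra case-splitting (the $x_T=0$ case and the sign reduction to $x_T\in S\subseteq X$) is an artifact of choosing to parametrize the ``bad set'' as $Z=X-S'\subseteq X$ rather than as $S^*-R^*\subseteq S^*$; once you work with $S^*$ directly, these preliminaries disappear. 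On the other hand, your explicit formulation of the driving combinatorial fact---in a downward $\kappa$-directed poset, a union of fewer than $\kappa$ non-cofinal subsets is non-cofinal---is a clean abstraction that the paper leaves implicit, and your closing remark on why one should not attempt to thin $F'_2$ to an injective subfamily is a useful diagnostic that the paper does not discuss.
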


\begin{proof}
Let $S\in F,$ and let $(x_T)_{T\in F'}$ be a family of elements
of $S^*$ indexed by a downward cofinal subset $F'\subseteq F.$
If there exists an $x\in S^*$ which occurs ``frequently'' as a value
of $x_T,$ in the sense that $\{T\in F'\mid x_T = x\}$ is downward
cofinal in $F',$ then for this $x,$ and $F''=\{T\in F'\mid x_T = x\},$
the condition of Definition~\ref{D.self-indulgent}
is trivially satisfied:
for $T\in F''$ we have $x-x_T=0,$ which belongs
to $R^*$ for all $R\in F.$

If there is no such ``frequently occurring'' value, then I
claim we can use $F''=F'$ and $x=0.$
Indeed, again writing $R\in F$ for the set called $S$ in
the definition of strong convergence (since we already
have a set we are calling $S),$ note that for every
such $R$ we have $\card(S^*- R^*) <\kappa;$
and for each $s\in S^*- R^*,$ the fact that $s$ does not
occur ``frequently'' among the $x_T$ tells us that we can
find $T_s\in F'$ such that no $x_T$ with $T\subseteq T_s$
and $T\in F'$ is equal to $s.$
If we let $T_0$ be the intersection of these $T_s$
over all $s\in S^*- R^*,$ then
by regularity of the cardinal $\kappa,$ we
have $T_0\in F,$ hence by downward cofinality of $F'$ in $F,$
the set $F'$ contains some $T_{R}\subseteq T_0.$
For all $T\subseteq T_{R}$ in $F'$ we have
$x_T-0\in R^*;$ so $(x_T)_{T\in F'}$
converges strongly to $0$ with respect to $F.$
\end{proof}

\section{Our main result}\label{S.main}

We shall now prove that for $F$ a self-indulgent family,
and $\T_F$ the topology it determines, we have
(\ref{d.capU=0})$\!\iff\!$(\ref{d.cupcap=0}).
Here (\ref{d.capU=0})$\!\implies\!$(\ref{d.cupcap=0})
is Corollary~\ref{C.cupcap=0}.
The plan of our proof of the converse will be to
show that, given $g\in G-\{0\}$ which we want to exclude from the
intersection in~(\ref{d.capU=0}), we can build up,
in a recursive manner, a sequence
$S_0,\,S_1,\,\dots$ with $g\notin U(S_0,\,S_1,\dots).$
The recursive step is given by the next lemma.
(The corresponding recursive step in the proof of
\cite[Theorem 2.1.4]{P+Z} uses an ``either/or'' argument at each
substep.
These were collapsed here into the single either/or argument
in the above proof that co-$\!\kappa\!$ filters on subsets of $G$
are self-indulgent.)

\begin{lemma}\label{L.one-more}
Let $F$ be a self-indulgent downward directed system of subsets of
$G$ satisfying~\textup{(\ref{d.cupcap=0})}.
Suppose $g\in G-\{0\},$ and that for some $n\geq 0,$
$S_0,\dots,S_{n-1}$ are members of $F$ such that
\begin{equation}\begin{minipage}[c]{35pc}\label{d.n-1}
$g\notin S_0^*+\dots+S_{n-1}^*.$
\end{minipage}\end{equation}
Then there exists $S_n\in F$ such that
\begin{equation}\begin{minipage}[c]{35pc}\label{d.n}
$g\notin S_0^*+\dots+S_{n-1}^*+S_n^*.$
\end{minipage}\end{equation}
\end{lemma}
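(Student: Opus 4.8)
The plan is to argue by contradiction: suppose no $S_n$ works, i.e., for every $T\in F$ we have $g\in S_0^*+\dots+S_{n-1}^*+T^*$. Since $F$ is downward directed and the sets $S_i^*$ are fixed, the set $F$ of witnesses is itself cofinal, so for each $T\in F$ we may pick a representation $g = y_T + x_T$ with $y_T\in S_0^*+\dots+S_{n-1}^*$ and $x_T\in T^*$. The finitely many sets $S_i^*$ being fixed, the element $y_T$ ranges over the \emph{fixed finite-fold sumset} $S_0^*+\dots+S_{n-1}^*$; but that set need not be finite, so I cannot immediately pass to a single $y$. Instead I would peel off the coordinates one at a time. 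Write $y_T = z_{0,T}+\dots+z_{n-1,T}$ with $z_{i,T}\in S_i^*$. The idea is to apply self-indulgence with $S=S_0$ to the family $(z_{0,T})_{T\in F}$: this yields an $x^{(0)}\in S_0^*$ and a downward cofinal $F_1\subseteq F$ along which $(z_{0,T})_{T\in F_1}$ converges strongly to $x^{(0)}$ with respect to $F$. Then apply self-indulgence with $S=S_1$ to $(z_{1,T})_{T\in F_1}$, getting $x^{(1)}\in S_1^*$ and a downward cofinal $F_2\subseteq F_1$, and so on; after $n$ steps we have a downward cofinal $F_n\subseteq F$ and elements $x^{(i)}\in S_i^*$ such that for each $i<n$, $(z_{i,T})_{T\in F_n}$ converges strongly to $x^{(i)}$ with respect to $F$.

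Now set $g' = g - (x^{(0)}+\dots+x^{(n-1)})$. From $g = z_{0,T}+\dots+z_{n-1,T}+x_T$ we get
\[
g' \;=\; \sum_{i<n}\bigl(z_{i,T}-x^{(i)}\bigr) \;+\; x_T
\]
for every $T\in F_n$. Fix any $R\in F$. By strong convergence of each $(z_{i,T})_{T\in F_n}$ to $x^{(i)}$, there is $T_i\in F_n$ with $z_{i,T}-x^{(i)}\in R^*$ for all $T\subseteq T_i$ in $F_n$; taking a common lower bound $T_*\in F_n$ of $T_0,\dots,T_{n-1}$ that is also $\subseteq R$ (using downward directedness of $F_n$ and cofinality), and using $x_{T_*}\in T_*^*\subseteq R^*$ since $T_*\subseteq R$, we conclude $g'\in (n+1)\,R^*$. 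As $R\in F$ was arbitrary, $g'\in\bigcap_{R\in F}(n+1)\,R^*$, so by hypothesis~(\ref{d.cupcap=0}) we get $g'=0$, i.e., $g = x^{(0)}+\dots+x^{(n-1)}\in S_0^*+\dots+S_{n-1}^*$, contradicting~(\ref{d.n-1}). Hence some $S_n\in F$ satisfies~(\ref{d.n}).

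The main obstacle is the bookkeeping in the iterated application of self-indulgence: one must be careful that the cofinal subsets $F_1\supseteq F_2\supseteq\dots$ remain downward cofinal in $F$ at each stage (so that the next family $(z_{i,T})_{T\in F_i}$ is legitimately indexed by a downward cofinal subset of $F$, as Definition~\ref{D.self-indulgent} requires), and that "convergence strongly with respect to $F$" along $F_i$ is preserved when we later restrict to the smaller $F_n$ — which is immediate, since restricting the index set to a cofinal subset only makes the $\forall j\leq i$ condition easier. A secondary point worth stating explicitly is the reduction at the very start: that it suffices to produce, for each $T$, \emph{some} decomposition $g=z_{0,T}+\dots+z_{n-1,T}+x_T$, with no coherence required across different $T$, since self-indulgence is exactly the tool that manufactures the needed coherence afterward.
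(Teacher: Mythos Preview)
Your proof is correct and follows essentially the same approach as the paper's: assume failure, pick a decomposition of $g$ for each $T\in F$, apply self-indulgence coordinate by coordinate to extract limits $x^{(i)}\in S_i^*$ along a shrinking chain of cofinal subsets, and then use~(\ref{d.cupcap=0}) on the remainder $g'$ to force $g'=0$, contradicting~(\ref{d.n-1}). The paper organizes the endgame in the contrapositive order (first observing $g'\neq 0$, then choosing a single $S\in F$ with $g'\notin (n+1)S^*$) and phrases the last coordinate $x_T$ as an $(n{+}1)$-st family that already converges strongly to $0$, but the substance is identical.
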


\begin{proof}
Assume the contrary.
Then for each $T\in F,$ the fact that~(\ref{d.n}) does not hold
with $S_n=T$ shows that we may choose $n+1$ elements,
\begin{equation}\begin{minipage}[c]{35pc}\label{d.nth}
$g_{0,T}\in S_0^*,\ \ \dots\,,\ \ g_{n{-}1,T}\in S_{n-1}^*,\ \ g_{n,T}
\in T^*$
\end{minipage}\end{equation}
such that
\begin{equation}\begin{minipage}[c]{35pc}\label{d.=g}
$g\ =\ g_{0,T}+\dots+g_{n{-}1,T}+g_{n,T}.$
\end{minipage}\end{equation}

Assuming for the moment that $n>0,$ let us
focus on the first term on the right-hand side of~(\ref{d.=g}),
and apply the assumption that $F$ is self-indulgent to the family of
elements $g_{0,T}\in S_0^*,$ as $T$ ranges over $F.$
This tells us that we can find a
$g_0\in S_0^*$ and a downward cofinal subset $F_0\subseteq F$ such that
\begin{equation}\begin{minipage}[c]{35pc}\label{d.F_0}
$(g_{0,T})_{T\in F_0}$ converges strongly to $g_0$ with
respect to $F.$
\end{minipage}\end{equation}

If $n>1,$ we then go through the same process for
the values $g_{1,T}\in S_1^*,$ as
$T$ ranges over the above downward cofinal subset $F_0\subseteq F.$
By the self-indulgence of $F,$ we can find
a $g_1\in S_1^*$ and a downward cofinal
subset $F_1$ of $F_0,$ such that
\begin{equation}\begin{minipage}[c]{35pc}\label{d.F_1}
$(g_{1,T})_{T\in F_1}$ converges strongly to $g_1$ with
respect to $F.$
\end{minipage}\end{equation}

We continue this way, through the construction
of $g_{n-1}$ and $F_{n-1}.$
We do not have to do anything at the next step, but simply set
$F_n = F_{n-1}$ (or if $n=0,$ $F_n=F),$
and $g_n = 0,$ since the assumption $g_{n,T}\in T^*$
in~(\ref{d.nth}) says that the family $(g_{n,T})_{T\in F}$ already
converges strongly to $0,$ whence the same holds when we restrict
the index $T$ to the cofinal subset $F_{n-1}\subseteq F.$

Now since $g_i\in S_i^*$ for $i<n,$ while $g_n=0,$ we have
$g_0+\dots+g_n\in S_1^*+\dots+S_{n-1}^*,$ so by~(\ref{d.n-1}),
$g\neq g_0+\dots+g_n.$
Letting $g'=g-(g_0+\dots+g_n)\neq 0,$
condition~(\ref{d.=g}) becomes
\begin{equation}\begin{minipage}[c]{35pc}\label{d.=g'}
$g'\ =\ (g_{0,T}-g_0)+\dots+(g_{n,T}-g_n)$ for all $T\in F_n.$
\end{minipage}\end{equation}

We now apply our hypothesis that $F$ satisfies~(\ref{d.cupcap=0}).
Since $g'\neq 0,$ this says there is some $S\in F$ such that
\begin{equation}\begin{minipage}[c]{35pc}\label{d.g'notin}
$g'\notin(n+1)S^*.$
\end{minipage}\end{equation}

But since for each $i,$ the system $(g_{i,T}-g_i)_{T\in F_n}$ converges
strongly to $0,$ we can find $T\in F_n$ such that each
element $g_{i,T}-g_i$ $(0\leq i\leq n)$
lies in $S^*.$
Thus,~(\ref{d.=g'}) contradicts~(\ref{d.g'notin}),
and this contradiction completes the proof of the lemma.
\end{proof}

We deduce

\begin{theorem}[{cf.\ \cite[Theorem 2.1.4]{P+Z}}]\label{T.main}
Let $F$ be a self-indulgent downward directed
system of subsets of an abelian group $G.$
\textup{(}In particular, by Lemma~\ref{L.Fr-self-ind}, for
any infinite $X\subseteq G$ and any $\kappa\leq\card(X),$ such
an $F$ is given by the co-$\!\kappa\!$ filter on $X.)$
Then the finest group topology on $G$ under which $F$ converges
to $0$ is Hausdorff if and only
if $F$ satisfies~\textup{(\ref{d.cupcap=0})}.
\end{theorem}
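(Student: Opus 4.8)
The plan is to combine the two implications already set up in the excerpt. The direction (\ref{d.capU=0})$\implies$(\ref{d.cupcap=0}) is exactly Corollary~\ref{C.cupcap=0}, and since (\ref{d.capU=0}) is the condition (via~(\ref{d.U-basis})) for $\T_F$ to be Hausdorff, this gives the ``only if'' half at once. So the substance is the converse: assuming $F$ is self-indulgent and satisfies~(\ref{d.cupcap=0}), I must verify~(\ref{d.capU=0}), i.e.\ that for every $g \in G - \{0\}$ there is a sequence $S_0, S_1, \dots \in F$ with $g \notin U(S_0, S_1, \dots)$.

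\medskip

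First I would fix $g \in G - \{0\}$ and build the sequence recursively. The base case $n = 0$ is the observation that the empty sum equals $\{0\}$, so $g \notin S_0^* + \dots + S_{n-1}^*$ holds vacuously (condition~(\ref{d.n-1}) with $n=0$). Then I would invoke Lemma~\ref{L.one-more} repeatedly: given $S_0, \dots, S_{n-1} \in F$ satisfying $g \notin S_0^* + \dots + S_{n-1}^*$, the lemma produces $S_n \in F$ with $g \notin S_0^* + \dots + S_n^*$. By induction this yields an infinite sequence $(S_n)_{n \in \omega}$ of members of $F$ such that $g \notin S_0^* + \dots + S_{n-1}^*$ for every $n$.

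\medskip

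Finally I would unpack the definition~(\ref{d.U}): $U(S_0, S_1, \dots) = \bigcup_{n \in \omega} \sum_{i < n} S_i^*$, and since $g$ lies in none of the partial sums $\sum_{i<n} S_i^*$, it lies in none of the terms of the union, hence $g \notin U(S_0, S_1, \dots)$. As $g$ was an arbitrary nonzero element, the intersection in~(\ref{d.capU=0}) contains no nonzero element; it obviously contains $0$, so equality holds, and by~(\ref{d.U-basis}) the topology $\T_F$ is Hausdorff. The parenthetical claim about co-$\kappa$ filters needs nothing new: it is immediate from Lemma~\ref{L.Fr-self-ind}, which shows such filters are self-indulgent (for regular $\kappa$; one should note that the hypothesis of Lemma~\ref{L.Fr-self-ind} requires $\kappa$ regular, and the theorem statement's parenthetical should be read with that understood, or the general-$\kappa$ case handled separately — but I expect the intended reading restricts to regular $\kappa$, matching the abstract).

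\medskip

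I do not anticipate a real obstacle here: all the work has been front-loaded into Lemma~\ref{L.one-more}, whose proof in turn is where self-indulgence and~(\ref{d.cupcap=0}) are genuinely used (the either/or argument extracting strongly convergent subfamilies and then deriving a contradiction from $g' \notin (n+1)S^*$). The only thing to be careful about in assembling the theorem is the logical bookkeeping — making sure the recursion is set up so that the hypothesis of Lemma~\ref{L.one-more} at stage $n$ is precisely its conclusion at stage $n-1$ — which the statements~(\ref{d.n-1}) and~(\ref{d.n}) have already been arranged to make seamless.
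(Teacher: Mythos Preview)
Your proposal is correct and follows essentially the same route as the paper: invoke Corollary~\ref{C.cupcap=0} for necessity, then for sufficiency fix $g\neq 0$ and apply Lemma~\ref{L.one-more} recursively from the vacuous sequence to obtain $(S_n)$ with $g\notin\sum_{i<n}S_i^*$ for all $n$, hence $g\notin U(S_0,S_1,\dots)$. Your aside about the regularity hypothesis on $\kappa$ in the parenthetical is a fair editorial observation, but does not affect the argument.
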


\begin{proof}
By Corollary~\ref{C.cupcap=0},~(\ref{d.cupcap=0}) is
necessary for our topology to be Hausdorff.
Conversely, assuming~(\ref{d.cupcap=0}),
we can use Lemma~\ref{L.one-more} recursively to
build up, for any $g\in G-\{0\},$
a sequence $S_0,\,S_1,\,\dots$ of members of $F,$ starting with
the vacuous sequence, such that for all $n,$ $g\notin\sum_{i<n} S_i^*.$
Thus, $g\notin U(S_0,S_1,\dots),$ giving~(\ref{d.capU=0}), which
is equivalent to our topology being Hausdorff.
\end{proof}

One may ask whether allowing co-$\!\kappa\!$ filters with
$\kappa$ strictly less than $\card(X)$ provides any useful examples.
Such a filter only ``scratches the surface'' of $X,$ so it might
seem implausible that it could converge to $0$ in a group topology.
But in fact, if $G$ is the group $\Z^I$ for an
uncountable set $I,$ under the product
topology, and $X$ the set of elements of $G$ which have value $1$
at a single point, and $0$ everywhere else, then we see that the
cofinite (i.e., co-$\!\aleph_0\!)$ filter determined by $X$
does converge to $0$ in $G.$

\section{Some counterexamples}\label{S.eg}

Before giving the rather complicated example showing
that Theorem~\ref{T.main} fails if the assumption that $F$ is
self-indulgent is removed,
let us note a couple of easier cases of
things that go wrong in the absence of self-indulgence.

\begin{example}\label{E.not-one-more}
An abelian group $G$ with an element $g,$ a downward directed family
$F$ of subsets, and a sequence $S_0,\,\dots,\,S_{n-1}\in F$
satisfying~\textup{(\ref{d.n-1})}, which cannot,
as in Lemma~\ref{L.one-more}, be extended so as to
satisfy~\textup{(\ref{d.n})}.
\end{example}

\begin{proof}[Construction and proof]
Let $G$ be the additive group of the real line, $F$ the set of
neighborhoods $(-\varepsilon,\varepsilon)$ of $0$ $(\varepsilon>0),$
and $g = 1\in G.$
Then the $\!1\!$-term sequence given by $S_0 = (-1,1)$ satisfies
$g\notin S_0^*,$ but cannot be extended to a $\!2\!$-term
sequence with $g\notin S_0^* + S_1^*.$
\end{proof}

Indeed, whenever, as in the above example, $F$ consists of
neighborhoods of the identity in the topology we are constructing, then
the conclusion of Lemma~\ref{L.one-more}
implies that $S_0^*+\dots+S_{n-1}^*$ is closed in that topology.
So if, starting with a topological group $G,$ we
take a basis $F$ of neighborhoods of $0$ not
all of which are closed sets, the conclusion of that lemma must fail.

Getting closer to our main example, we give

\begin{example}\label{E.proper_vs_G}
An abelian group $G$ and a downward
directed family $F$ of subsets of $G$ such that
the union in~\textup{(\ref{d.cupcap=0})} is a proper subgroup of $G,$
but the intersection in~\textup{(\ref{d.capU=0})} is all of $G.$
\end{example}

\begin{proof}[Construction and proof]
Let $G$ be the countable direct product group $\prod_{n>0}\,\Z/n\Z,$
and for each positive integer $m,$ let $S(m)\subseteq G$ consist
of all elements whose first through $\!m\!$-th coordinates lie in
$\{-1,0,1\},$ the remaining coordinates being unrestricted.
Thus, $S(1)\supseteq S(2)\supseteq\dots,$ so $F=\{S(m)\}$ is
downward directed.
(These sets satisfy $S(m)^* = S(m),$ but I will write $S(m)^*$ below
when the conditions we want to verify refer to sets $S^*.)$

To show that the intersection in~(\ref{d.capU=0}) is all of $G,$
we will in fact show that for any $m_0,\,m_1,\,\dots\,,$ we have
\begin{equation}\begin{minipage}[c]{35pc}\label{d.=G}
$S(m_0)^*+\dots+S(m_{m_0})^*\ =\ G.$
\end{minipage}\end{equation}

Indeed, let $g\in G.$
To describe the $m_0$ summands comprising an expression
for $g$ as a member of $S(m_0)^*+\dots+S(m_{m_0})^*,$ we shall first
describe their coordinates in
$\Z/1\Z,\,\dots,\,\Z/m_0\Z,$ then their remaining coordinates.
We choose the former coordinates all to lie in $\{-1,0,1\},$ and
be chosen so that for each $i\leq m_0,$ the $\!i\!$-th coordinates
of these $m_0$ elements sum to the $\!i\!$-th coordinate of $g.$
This is possible because the relevant
coordinates of $g$ are members of groups $\Z/n\Z$ with $n\leq m_0.$

We then choose the coordinates after the $\!m_0\!$-th by taking
these coordinates of the summand in $S(m_0)$ to agree with
those of $g,$ and those in the other summands to be zero.
It is easy to see that the elements we have constructed
belong to the desired $S(i)^*$ and sum to $g.$

On the other hand, consider any $g$ in the union in~(\ref{d.cupcap=0}).
Say it lies in the member of that union indexed by $n\in\omega.$
Thus, for each $m,$ $g$ lies in $n\,S(m)^*;$ i.e., for
each $m,$ the first $m$ coordinates of $g$ are sums of $n$ terms
in $\{0,1,-1\};$ i.e., each is the residue of an integer of absolute
value $\leq n.$
Since this is so for every $m,$ \emph{every} coordinate
of $g$ is the residue of an integer of absolute value $\leq n.$
Elements having this property for some $n$ clearly form a proper
subgroup of $G.$
\end{proof}

Finally, here is the example showing that in the absence of
self-indulgence, Theorem~\ref{T.main} fails.
In the development below, where we use square
roots of $7$ modulo powers of $3,$ we could, more generally, replace
$3$ by any prime $p,$ take any invertible irrational element $\alpha$
of the ring $\Z_p$ of $\!p\!$-adic integers, and look
at the images of $\alpha,-\alpha\in\Z_p$ in the rings $\Z/p^k\Z.$
The choice of a quadratic irrationality
just makes the presentation a little simpler.

\begin{example}\label{E.sqrt7}
A countable, downward directed family $F$ of subsets
of $\Z$ for which~\textup{(\ref{d.cupcap=0})} holds,
but~\textup{(\ref{d.capU=0})} does~not.
\end{example}

\begin{proof}[Construction and proof]
For each integer $k>0,$ let
\begin{equation}\begin{minipage}[c]{35pc}\label{d.sqt7}
$S(k)\ =\ \{x\in \Z\mid$ the image of $x$ in $\Z/3^k\Z$ is
either $0,$ or a square root of $7$ in that ring$\}.$
\end{minipage}\end{equation}

Since $S(1)\supseteq S(2)\supseteq\dots\,,$
the set $F=\{S(k)\}$ is downward directed.
To show that~(\ref{d.cupcap=0}) holds, let $g$ be any nonzero
member of $\Z,$ and $n$ any positive integer.
Choose a positive integer $k$ large enough so that
\begin{equation}\begin{minipage}[c]{35pc}\label{d.g2-7m2}
$3^k$ does not divide any of the $n+1$ nonzero
integers $g^2-7\,m^2$ with $0\leq m\leq n.$
\end{minipage}\end{equation}
(E.g., taken any $k$ such that $3^k>\max(g^2,\,7n^2).)$
Then I claim that $g\notin n S(k).$

Indeed, suppose we had
\begin{equation}\begin{minipage}[c]{35pc}\label{d.=g_inS(k)}
$g\ =\ g_0+\dots+g_{n-1},$ with all $g_i\in S(k).$
\end{minipage}\end{equation}
If we let $c$ denote a square root of $7$ in $\Z/3^k\,\Z,$
(which exists, by Hensel's Lemma \cite[Theorem~3.4.1]{p-adic}, and
is unique up to sign), then
by~(\ref{d.sqt7}), each of the $g_i$ in~(\ref{d.=g_inS(k)})
has residue modulo $3^k$ either $0,$ $c,$ or $-c.$
Hence~(\ref{d.=g_inS(k)}) implies that the residue of $g$
modulo $3^k$ has the form $m\,c$ for some integer $m$ of absolute
value $\leq n.$
Squaring, we conclude that $g^2 \equiv 7\,m^2\pmod{3^k},$
contradicting~(\ref{d.g2-7m2}).
This establishes~(\ref{d.cupcap=0}).

To show that~(\ref{d.capU=0}) fails, consider any
sequence $S(m_0),\,S(m_1),\,\dots$ of elements of $F,$ determined
by nonnegative integers $m_0,\,m_1,\dots\,.$
I claim that $U(S(m_0),S(m_1),\dots)=\Z;$ in fact, that
\begin{equation}\begin{minipage}[c]{35pc}\label{d.=Z}
$S(m_0)^*+S(m_1)^*+\,\dots\,+S(m_{3^{m_0}}^{\strt})^*\ =\ \Z.$
\end{minipage}\end{equation}

For let $c$ be a square root of $7$ in $\Z/3^{m_0}\,\Z.$
I claim that every $S(m_i)$ contains an integer $c_i$ whose
residue modulo $3^{m_0}$ is $c.$
For if $m_i \leq m_0,$ then
$S(m_i)$ contains $S(m_0),$ and so contains every element thereof
whose residue class modulo $3^{m_0}$ is $c,$ while if $m_i\geq m_0,$
then the residue class $c$ in $\Z/3^{m_0}\Z$ can be lifted
to a square root of $7$ in $\Z/3^{m_i}\Z$ (cf.\ proof of Hensel's
Lemma), a representative of which in $\Z$ will be the desired $c_i.$

Given any $g\in \Z,$ the element $g/c \in \Z/3^{m_0} \Z$ is the
residue of a nonnegative integer $h<3^{m_0}.$
Let us choose
elements $g_i\in S(m_i)$ for $i=1,\dots,3^{m_0}$ such that for exactly
$h$ values of $i,$ $g_i$ is the element $c_i$ chosen in the preceding
paragraph, while for the remaining values, $g_i = 0.$
Then the
sum $g_1+\dots+g_{3^{m_0}}^{\strt}$ is congruent
modulo $3^{m_0}$ to $h\,c,$
which by choice of $h$ is congruent to $g.$
On the other hand, $S(m_0)$ contains all multiples of $3^{m_0}$
(see~(\ref{d.sqt7})), so by choosing $g_0\in S(m_0)$
to be an appropriate one of these, we can get exact equality,
\begin{equation}\begin{minipage}[c]{35pc}\label{d.+g3=g}
$g\ =\ g_0+g_1+\dots+g_{3^{m_0}}^{\strt},$
\end{minipage}\end{equation}
as required to establish~(\ref{d.=Z}),
and hence falsify~(\ref{d.capU=0}).
\end{proof}

One can get similar examples by replacing the group of $\!3\!$-adic
integers implicit in the above construction with other examples
of a topological group $S$ containing
a subgroup $G$ and a cyclic dense subgroup $H$
having trivial intersection.
(In the above example, $S=\Z_3,$ $G=\Z$ and $H=\lang\,\sqrt{7}\ \rang.)$
For instance, one can take $S=\mathbb{R}/\Z,$
let $G$ be its dense subgroup $\mathbb{Q}/\Z,$ and
let $H$ be the subgroup generated by
the image $\beta$ of an irrational $b\in\mathbb{R}.$
Letting $F$
consist of the intersections of $G$ with a family of neighborhoods
of $\{-\beta,\,0,\,\beta\}\subseteq S$ under the usual topology,
one gets the same sort of behavior as in Example~\ref{E.sqrt7}.

\section{Remarks on self-indulgent sets}\label{S.remarks}

Though the concept of a self-indulgent set of subsets
of $G$ has proved useful, it is not clear that we have
formulated the best version of it.
Originally, I thought it would be enough to require that
for every family $(x_T)_{T\in F}$ there should exist a cofinal
subset $F'\subseteq F$ making $(x_T)_{T\in F'}$ converge strongly
to $x$:
I thought this would imply the condition now used, that for
every such family indexed by a cofinal subset $F'\subseteq F,$
one can get strong convergence on a smaller cofinal subset
$F''\subseteq F'.$
But I was unable to prove this.

Before settling on the present fix for that
problem, I considered other possibilities.
For instance, instead of looking at cofinal subsets of $F,$
one might look at isotone maps $f$ of arbitrary
downward directed posets $I$ into $F,$ having downward cofinal images.
Convergence of the system $(c_{f(i)})_{i\in I}$ with
respect to the ordering on $I$ would be a weaker condition
than convergence with respect to the ordering on the image set $f(I).$
Perhaps some variant of that idea could still be useful.

One may also ask whether examples can be found of self-indulgent
families essentially different from our co-$\!\kappa\!$ filters.
The answer is, ``Yes, but \dots''.
The lemma below gives such examples, but they require
knowing in advance the topology one is aiming at, so they are
of no evident use in getting new applications of Theorem~\ref{T.main}.
%

\begin{lemma}\label{L.compact}
Let $G$ be a locally compact Hausdorff topological abelian group,
and let $F$ be
the set of all compact neighborhoods of $0$ in $G$ \textup{(}or
any downward cofinal subset thereof\textup{)}.
Then $F$ is self-indulgent.
\end{lemma}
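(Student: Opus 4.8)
\emph{Proof proposal.}

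The plan is to use compactness of $S^*$ to extract a cluster point of the given family, and then to refine the family to a ``neighborhood-sized'' (downward cofinal) subnet that actually converges; the second step is the delicate one. First, three easy observations. (i) For any compact neighborhood $S$ of $0$, the set $S^*=S\cup\{0\}\cup-S$ is again a compact neighborhood of $0$: it is a union of three compact sets (continuity of $x\mapsto-x$ makes $-S$ compact) and it contains the neighborhood $S$. (ii) The sets $S^*$, for $S$ ranging over $F$, form a neighborhood basis of $0$ in $G$: each contains $S$, and any neighborhood $W$ of $0$ contains $S^*$ as soon as $S$ is a compact neighborhood with $S\subseteq W\cap-W$. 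Hence, for a net indexed by a downward cofinal subset of $F$, ``converges strongly to $x$ with respect to $F$'' (Definition~\ref{D.strong}) simply means ``converges to $x$'' in the topology of $G$. (iii) Since $F$ is downward directed, any downward cofinal subset of $F$ is itself downward directed, and any downward cofinal $F''\subseteq F'\subseteq F$ is downward cofinal in $F$, hence a neighborhood basis of $0$.

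So let $S\in F$ and let $(x_T)_{T\in F'}$ be a family of elements of $S^*$ indexed by a downward cofinal $F'\subseteq F$. Viewing it as a net in the compact space $S^*$, it has a cluster point $x\in S^*$; that is, for every neighborhood $W$ of $x$ and every $T\in F'$ there is $T'\in F'$ with $T'\subseteq T$ and $x_{T'}\in W$. By observation (iii), the lemma will follow once we produce a downward cofinal $F''\subseteq F'$ along which the net converges to $x$.

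When $G$ is metrizable this is quick: $F'$ then has a countable downward cofinal subset, which may be taken to be a descending chain $T_0\supseteq T_1\supseteq\cdots$ in $F'$; since $S^*$ is compact metrizable, hence sequentially compact, $(x_{T_n})$ has a convergent subsequence, whose index set is the required $F''$. One may moreover always reduce to the case $G$ \emph{$\sigma$-compact}, by replacing $G$ with the open subgroup $\langle S^*\rangle$ (open and, being generated by the compact neighborhood $S^*$, $\sigma$-compact), replacing $F'$ by $\{T\in F':T\subseteq S^*\}$ (still downward cofinal, consisting of neighborhoods of $0$ in $\langle S^*\rangle$ and downward cofinal in its neighborhood filter), and noting that the $x_T$ already lie in the $\langle S^*\rangle$-compact set $S^*$ and that strong convergence with respect to $F$ is unaffected. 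However, $\sigma$-compact locally compact groups need not be metrizable, so this reduction does not by itself finish the argument.

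The crux is therefore the non-metrizable case, and here I would build $F''$ by a transfinite recursion of length $\kappa=\chi(G)=\mathrm{cf}(F')$: fix in advance a family $\{R_\alpha:\alpha<\kappa\}$ downward cofinal in $F'$ — so that demanding $T_\alpha\subseteq R_\alpha$ guarantees $F''=\{T_\alpha\}$ is downward cofinal — together with a neighborhood basis $\{W_\alpha:\alpha<\kappa\}$ of $0$, and at stage $\alpha$ use the cluster-point property of $x$ (or of a cluster point of the partial net built so far, restricted to an appropriate tail of $F'$, whose existence again rests on compactness of $S^*$) to choose $T_\alpha$ with $x_{T_\alpha}$ close to the target. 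The main obstacle will be the limit stages: in a non-metrizable group the neighborhoods of $0$ are not closed under intersections of fewer than $\chi(G)$ of them — indeed there may be no neighborhood basis of $0$ that is a descending chain — so one cannot keep the $T_\alpha$ descending, and the ``tail of $F''$ below a given $T_{\alpha_0}$'' is not merely an end-segment of the construction. The recursion must therefore be run with enough bookkeeping (choosing each $T_\alpha$ small enough, and interleaving the roles of the $R_\alpha$ and the $W_\alpha$) that whenever a later $T_\gamma$ falls inside such a tail, $x_{T_\gamma}$ is already forced into the prescribed neighborhood of $x$. This is precisely the sort of subtlety about \emph{cofinal} (rather than arbitrary) subnets that motivates the ``self-indulgent'' formulation discussed in \S\ref{S.remarks}, and checking that the bookkeeping can be carried through using local compactness of $G$ — perhaps after the reduction to the $\sigma$-compact case and an appeal to the structure theory of such groups — is where the real work lies.
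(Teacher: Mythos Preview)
Your opening reductions (observations (i)--(iii)) match the paper exactly: the paper also begins by noting that local compactness makes $F$ a neighborhood basis of $0,$ so that strong convergence with respect to $F$ is simply convergence in $G.$ From there, however, the paper finishes in one sentence: it asserts that compactness of $S^*$ implies that \emph{every} system of points indexed by a directed set has a cofinal subsystem converging to a point of $S^*,$ and observes that the particular instances needed for self-indulgence are a special case. There is no separate treatment of the metrizable and non-metrizable cases, no reduction to $\sigma$-compact subgroups, and no transfinite recursion.

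The subtlety you raise is legitimate: the textbook characterization of compactness supplies a convergent \emph{subnet}, which in general need not be the restriction to a cofinal subset of the original index set, whereas Definition~\ref{D.self-indulgent} explicitly demands a downward cofinal $F''\subseteq F'.$ The paper passes over this point without comment. But your proposal does not actually close the gap either: you yourself label the limit-stage bookkeeping in the non-metrizable case ``the main obstacle'' and leave it as ``where the real work lies,'' and your reduction to the $\sigma$-compact case is, as you note, insufficient by itself. So what you have written is not yet a proof --- it is a careful diagnosis of why the one-line argument the paper gives is not self-evidently complete, together with an unfinished program for repairing it. If you want a complete argument, you must either (a) prove the general claim that every net in a compact Hausdorff space admits a convergent cofinal subnet, or (b) exploit the specific structure of $F'$ (a neighborhood base of $0$ in a topological group) to manufacture $F''$ directly; your recursion sketch gestures at (b) but does not carry it out.
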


\begin{proof}
Because $G$ is locally compact, $F$ is a neighborhood basis
of $0$ in $G,$ so strong convergence with respect to $F$ is
equivalent to convergence.

Now for all $S\in F,$ compactness of $S^*$ implies that every
system of points indexed by a directed set has a cofinal
subsystem which converges to a point of $S^*;$
so in particular, we have the cases of this condition
required by the definition of self-indulgence.
\end{proof}

One may ask whether for $F$ a self-indulgent family that yields
a Hausdorff topology on a group $G,$ the members of $F$ must
become compact under that topology.
The difficulty, when one tries to prove this, is that
the self-indulgence condition only applies to families of
points indexed by cofinal subsets of $F,$ while compactness would
require a like condition for families indexed
by arbitrary directed sets.
In a similar vein, I.\,V.\,Protasov (personal communication)
has asked whether
under a topology so induced, the group $G$ must be complete.

\section{The nonabelian case}\label{S.nonab}

In this section we drop the assumption
that our group is abelian, letting
$G$ be an arbitrary group, written multiplicatively,
and see how the statement
and proof of Theorem~\ref{T.main} can be adapted to this situation.
In view of our multiplicative notation, we will denote
the identity element of $G$ by $e,$ and for $S\subseteq G$ write
\begin{equation}\begin{minipage}[c]{35pc}\label{d.S^*mult}
$S^*\ =\ S\cup\{e\}\cup S^{-1}.$
\end{minipage}\end{equation}

In \cite[\S3.1-\S3.2]{P+Z} Protasov and Zelenyuk likewise
generalize their results to noncommutative groups.
(Cf.\ also \cite[\S1.3]{YZ_bk}.)
As the analog of the sums $\sum_{i<n}\,S^*_i$
of~(\ref{d.U}), they use the union, over all permutations of the
index set $n,$ of the corresponding permuted product of the $S^*_i$
(and then, as in~(\ref{d.U}), take the union of this over all $n).$

We will take a different approach here.
Let us first note that it will not work to simply
replicate the definition~(\ref{d.U}) with sums
$S^*_0+S^*_1+\dots+S^*_{n-1}$
replaced by products $S^*_0\,S^*_1\,\dots\,S^*_{n-1}.$
The trouble is that we
cannot say that $\bigcup_{n\in\omega}\,S^*_0\,S^*_1\,\dots\,S^*_{n-1}$
will contain the product of two sets of the same sort; essentially
because $\omega$ does not contain a union of two
successive copies of itself as an ordered set.

So let us use an index set which does.
Let
\begin{equation}\begin{minipage}[c]{35pc}\label{d.Q}
$Q\ =$ a totally ordered set of the order-type of the rational numbers.
\end{minipage}\end{equation}
(We do not call this $\mathbb{Q}$ because we are not interested
in its algebraic structure, but only in its order-type.
In fact, for our one explicit calculation,
in the proof of Lemma~\ref{L.S_in_U},
a different realization of this order-type will prove convenient.)

Given any $\!Q\!$-tuple $(S_q)_{q\in Q}$ of subsets of $G,$ let
\begin{equation}\begin{minipage}[c]{35pc}\label{d.UQ}
$U((S_q)_{q\in Q})\ =\ \bigcup_{q_0<\dots<q_n\in Q}
\,S^*_{q_0}\dots\,S^*_{q_n},$
\end{minipage}\end{equation}
where the union is over all finite increasing sequences in $Q.$
The sets~(\ref{d.UQ}) have the property
which we just noted that $\!\omega\!$-indexed products lack;
indeed, it is easy to see

\begin{lemma}\label{L.UU}
Let $(S_q)_{q\in Q}$ be a family of subsets of $G,$
and let $\sigma,\,\tau:Q\to Q$ be two order-embeddings
such that $\sigma(q)<\tau(q')$ for all $q,\,q'\in Q.$
Then
\begin{equation}\begin{minipage}[c]{35pc}\label{d.UU}
$U((S_{\sigma(q)})_{q\in Q})\ U((S_{\tau(q)})_{q\in Q})\ \subseteq
\ U((S_q)_{q\in Q}).$\qed
\end{minipage}\end{equation}
\end{lemma}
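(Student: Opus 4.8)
The plan is simply to unwind the definition~(\ref{d.UQ}) of $U$ applied to each of the two factors, and then to splice the two resulting finite increasing index-sequences together into a single one, using the hypothesis $\sigma(q)<\tau(q')$ to see that the spliced sequence is still increasing.

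Concretely, I would take arbitrary elements $a\in U((S_{\sigma(q)})_{q\in Q})$ and $b\in U((S_{\tau(q)})_{q\in Q})$ and show that $ab\in U((S_q)_{q\in Q})$. By~(\ref{d.UQ}) there are a finite increasing sequence $q_0<\dots<q_m$ in $Q$ and elements $x_i\in S^*_{\sigma(q_i)}$ with $a=x_0\cdots x_m$, and likewise a finite increasing sequence $r_0<\dots<r_k$ in $Q$ and elements $y_j\in S^*_{\tau(r_j)}$ with $b=y_0\cdots y_k$.

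Since $\sigma$ and $\tau$ are order-embeddings, $\sigma(q_0)<\dots<\sigma(q_m)$ and $\tau(r_0)<\dots<\tau(r_k)$ are increasing sequences in $Q$; and the hypothesis gives in particular $\sigma(q_m)<\tau(r_0)$, so
\[
\sigma(q_0)<\dots<\sigma(q_m)<\tau(r_0)<\dots<\tau(r_k)
\]
is a single finite increasing sequence in $Q$. Now $ab=x_0\cdots x_m\,y_0\cdots y_k$ is a product of one element of $S^*_p$ for each successive entry $p$ of this sequence, so by~(\ref{d.UQ}) we conclude $ab\in U((S_q)_{q\in Q})$, which is~(\ref{d.UU}).

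Honestly there is no real obstacle here: the only thing to watch is that the spliced sequence is strictly increasing, and that is exactly the content of the separation hypothesis on $\sigma$ and $\tau$ (and is precisely what is not available when one splices two copies of $\omega$, as noted just before~(\ref{d.Q})). The degenerate cases in which $a$ or $b$ is an empty product (equal to $e$) require no separate argument: since $e\in S^*_q$ for every $q$ by~(\ref{d.S^*mult}), such a factor can always be absorbed, or one simply takes the corresponding index-sequence to be empty and runs the same argument.
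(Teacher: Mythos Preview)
Your argument is correct and is exactly the intended one: the paper does not give a proof of this lemma at all (note the \qed\ at the end of the statement and the phrase ``it is easy to see'' preceding it), so the splicing-of-index-sequences argument you wrote out is precisely what the reader is meant to supply.
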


The next result shows that sets of the form $U((S_q)_{q\in Q})$
can be made small enough to do what we will need.

\begin{lemma}\label{L.S_in_U}
If $\T$ is a group topology on $G,$ and $S$ a neighborhood
of $e$ under $\T,$ then one can choose for each $q\in Q$ a neighborhood
$S_q$ of $e$ under $\T$ so that $U((S_q)_{q\in Q})\subseteq S.$
\end{lemma}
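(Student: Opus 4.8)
The plan is to realize $Q$ concretely as the set $\{0,1\}^{<\omega}$ of finite binary strings, ordered so that, writing $s0$ and $s1$ for the two one-symbol extensions of a string $s$, one has $s0<s<s1$, with this ordering propagated down the tree (the whole subtree rooted at $s0$ preceding $s$, which precedes the whole subtree rooted at $s1$). This is a countable dense totally ordered set with no least or greatest element, so by Cantor's theorem it has the order type of $\mathbb{Q}$, and we may therefore take $Q$ to be this ordered set. In this picture the ancestor relation in the binary tree is the prefix relation, so every finite nonempty subset of $Q$ has a \emph{deepest common ancestor}, namely its longest common prefix. Write $d(s)\geq 0$ for the length of the string $s$, so that $d(\epsilon)=0$.

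Using continuity of the group operations, I would choose symmetric neighborhoods $W_0\supseteq W_1\supseteq W_2\supseteq\cdots$ of $e$ with $W_0\subseteq S$ and $W_{m+1}W_{m+1}W_{m+1}\subseteq W_m$ for all $m\geq 0$, and then set $S_q=W_{d(q)+1}$ for each $q\in Q$. Since each $W_m$ is a symmetric neighborhood of $e$ that contains $e$, we have $W_m^*=W_m$; in particular each $S_q$ is a neighborhood of $e$ and $S_q^*=S_q$. It then remains to verify that $U((S_q)_{q\in Q})\subseteq S$, that is, that $S^*_{q_0}\cdots S^*_{q_n}\subseteq S$ for every finite increasing sequence $q_0<\cdots<q_n$ in $Q$.

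The crux is the following, which I would prove by induction on the length $n+1$ of the sequence:
\[
S^*_{q_0}\cdots S^*_{q_n}\subseteq W_{d(v)}\qquad\text{whenever $q_0<\cdots<q_n$ has deepest common ancestor $v$.}
\]
For $n=0$ we have $v=q_0$ and the assertion is $W_{d(q_0)+1}\subseteq W_{d(q_0)}$, which holds by nesting. For the inductive step one locates $v$ relative to the sequence. If $v=q_i$ for some (necessarily unique) index $i$, then each $q_j$ with $j<i$ has $v0$ as a prefix and each $q_j$ with $j>i$ has $v1$ as a prefix, so the sequence breaks up as a left block lying in the subtree of $v0$, followed by the single term $q_i=v$, followed by a right block lying in the subtree of $v1$. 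If instead $v$ is not one of the $q_i$, then $v$ is a proper prefix of every $q_i$, and since $v$ is their \emph{longest} common prefix, neither do all the $q_i$ extend $v0$ nor do all extend $v1$; so the sequence breaks up into a nonempty left block of strings extending $v0$ and a nonempty right block of strings extending $v1$. In either case, the deepest common ancestor of a nonempty block, being a common prefix of strings that all extend $v0$ (resp.\ $v1$), itself extends $v0$ (resp.\ $v1$) and hence has depth $\geq d(v)+1$; so by the inductive hypothesis the product of the $S^*_q$ over each nonempty block lies in $W_{d(v)+1}$, as does the middle factor $S^*_{q_i}=W_{d(v)+1}$ when it is present, while an absent block contributes $\{e\}\subseteq W_{d(v)+1}$. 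Hence the whole product lies in $W_{d(v)+1}W_{d(v)+1}W_{d(v)+1}\subseteq W_{d(v)}$, which completes the induction. Finally, since the deepest common ancestor $v$ of any finite increasing sequence has $d(v)\geq 0$, the displayed inclusion gives $S^*_{q_0}\cdots S^*_{q_n}\subseteq W_{d(v)}\subseteq W_0\subseteq S$; taking the union over all such sequences yields $U((S_q)_{q\in Q})\subseteq S$, as required.

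The point needing care is the bookkeeping tying together the two numerical choices: the indexing $S_q=W_{d(q)+1}$ and the condition $W_{m+1}W_{m+1}W_{m+1}\subseteq W_m$ must be matched so that passing to a sub-block — whose deepest common ancestor is at least one level deeper than $v$ — buys exactly the one extra level of shrinkage consumed by the threefold product in the induction. One must also recognize that splitting at the deepest common ancestor is the right recursion: peeling off the first term of the sequence decreases nothing useful, whereas splitting at $v$ always sends each block strictly into one of the two subtrees of $v$, so both blocks are genuinely shorter sequences and the induction goes through. The remaining ingredients — existence of the $W_m$, the identification of $Q$ with the binary tree, and the simplification $W_m^*=W_m$ — are routine.
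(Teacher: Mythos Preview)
Your proof is correct and rests on the same underlying idea as the paper's: realize $Q$ via the binary tree (the paper uses dyadic rationals $m/2^i$ in $(0,1)$, which is the same tree under the correspondence $1/2\leftrightarrow\epsilon$, $1/4\leftrightarrow 0$, $3/4\leftrightarrow 1$, \dots), and assign $S_q$ according to depth in that tree, so that your $W_{d(q)+1}$ matches the paper's $T_i$ for $q=m/2^i$ in lowest terms. The difference lies in how the inclusion $S^*_{q_0}\cdots S^*_{q_n}\subseteq S$ is verified. The paper first \emph{enlarges} the index set to all of $\{m/2^j:0<m<2^j\}$ for a common denominator $2^j$, then collapses the product level by level from the bottom up, each pass replacing adjacent triples $T_{j-1}T_{j-1}T_{j-1}$ by $T_{j-2}$. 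You instead argue by a top-down recursion on the length of the sequence, splitting at the deepest common ancestor into a left subtree, possibly the ancestor itself, and a right subtree. Your organization avoids the preliminary enlargement step and the somewhat delicate device the paper uses to start its iteration (temporarily replacing $T_j$ by $T_{j-1}$ at the bottom level); the paper's version, on the other hand, makes the ``middle thirds'' Cantor-set picture noted after the proof more transparent.
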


\begin{proof}
(Cf.\ \cite[proof of Lemma~3.1.1]{P+Z}.)
Let $T_0=S,$ and choose recursively for each $i>0$ a neighborhood
$T_i$ of $e$ in $\T$ so that $T_i\,T_i\,T_i\subseteq T_{i-1}.$
Identify $Q$ as an ordered set with the set of
those rational numbers in the
unit interval $(0,1)$ of the form $m/2^i,$ and
\begin{equation}\begin{minipage}[c]{35pc}\label{d.m/}
for each $q=m/2^i,$ written in lowest terms, let $S_q=T_i.$
\end{minipage}\end{equation}
Then I claim that $U((S_q)_{q\in Q})\subseteq S.$

To show this, it suffices to show that for all finite sequences
$q_0<\dots<q_n\in Q$ we have $S_{q_0}^*\dots\,S_{q_n}^*\subseteq S.$
If we take a common denominator $2^j$ for all members of such a
finite sequence, then by enlarging the sequence we can assume
without loss of generality that
$\{q_0,\dots,q_n\}$ is the whole set
\begin{equation}\begin{minipage}[c]{35pc}\label{d.m/2j}
$\{m/2^j\ \mid\ 0<m<2^j\}.$
\end{minipage}\end{equation}
Let us now enlarge the finite product of sets $S_q$ determined
by~(\ref{d.m/2j}) still further, by changing those factors
whose index $q$ has the largest possible denominator,
$2^j,$ from $S_q=T_j$ to the larger set $T_{j-1}.$
(This will help in an induction to come.)

If we now classify the elements of~(\ref{d.m/2j})
into those which, expressed in lowest terms,
have denominator $2^j,$ those having denominator $2^{j-1},$ and those
with smaller denominators,
we see that each term with denominator $2^{j-1}$ is flanked
on each side by terms with denominator $2^j,$ and that
the resulting $\!3\!$-term strings of indices with denominators
$2^j,$ $2^{j-1},$ $2^j$ are disjoint.
In the modified product of subsets of $G$ that we have
described, the factors corresponding to
these strings of three terms have the form $T_{j-1}\,T_{j-1}\,T_{j-1}.$
By assumption, this product is contained in $T_{j-2}.$
Replacing each product $T_{j-1}\,T_{j-1}\,T_{j-1}$ with the possibly
larger set $T_{j-2},$ we conclude that our product of subsets is
contained in a product of the same form, but with subscripts
now running not over~(\ref{d.m/2j}) but over the
elements of $Q$ with denominator $\leq 2^{j-1}.$
(Note that ``of the same form'' includes the condition
that elements $q$ with largest possible denominator, now
$2^{j-1},$ are assigned the set $T_{j-2}$ rather than $T_{j-1}.)$

Iterating this reduction, we conclude that our product is contained in
one with the single index element $1/2^1,$ which is
assigned the set $T_{1-1}=T_0=S,$ giving the desired inclusion.
\end{proof}

(Amusing observation: The set $Q$ used in the above
proof has a natural order-isomorphism with the set
of intervals deleted in the ``middle third'' construction of the Cantor
set (arranged from left to right); and if we think of the relation
$T_i\,T_i\,T_i\subseteq T_{i-1}$ in the above proof intuitively
as saying that $T_i$ has one-third the ``weight'' of $T_{i-1},$ then the
weights of these sets can be taken to agree with the
lengths of those deleted intervals.
Thus, the above proof is related to the fact
that the total length of those deleted intervals is $1.)$

In studying the finest group topology in which a given downward
directed set $F$ converges to $e,$ it will be convenient to
require that $F$ be closed under conjugation by elements
of $G;$ i.e., that for every $S\in F$ and $g\in G$ we have
$gSg^{-1}\in F.$
The following lemma allows us to reduce the general case to that case.

\begin{lemma}\label{L.F^G}
Let $F$ be a downward directed family of nonempty subsets of $G,$
and \textup{(}following \cite[Definition~3.1.6]{P+Z}\textup{)}
let us write $F^G$ for the set of all
subsets of $G$ of the form $\bigcup_{g\in G}\,g\,S_g\,g^{-1},$
for $\!G\!$-tuples $(S_g)_{g\in G}$ of members of $F.$

Then $F^G$ is again a downward directed family of nonempty subsets
of $G,$ it is invariant under conjugation by elements of $G,$ and for
every group topology $\T$ on $G,$ the family $F^G$ converges
to $e$ under $\T$ if and only if $F$ does.
\end{lemma}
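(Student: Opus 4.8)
The plan is to verify the three assertions in turn, each being a short direct computation. That $F^G$ consists of nonempty sets is immediate: each set $g\,S_g\,g^{-1}$ appearing in a union of the given form is nonempty since $S_g\neq\emptyset,$ hence so is the union. For downward directedness, I would take two members $A=\bigcup_{g\in G}g\,S_g\,g^{-1}$ and $B=\bigcup_{g\in G}g\,T_g\,g^{-1}$ of $F^G,$ use the downward directedness of $F$ to choose for each $g\in G$ some $R_g\in F$ with $R_g\subseteq S_g\cap T_g,$ and observe that $C=\bigcup_{g\in G}g\,R_g\,g^{-1}$ then lies in $F^G$ and satisfies $C\subseteq A\cap B$: any element of $C$ lies in some $g\,R_g\,g^{-1},$ hence in both $g\,S_g\,g^{-1}\subseteq A$ and $g\,T_g\,g^{-1}\subseteq B.$

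For invariance under conjugation, given $A=\bigcup_{g\in G}g\,S_g\,g^{-1}\in F^G$ and $h\in G,$ I would rewrite $h\,A\,h^{-1}=\bigcup_{g\in G}(hg)\,S_g\,(hg)^{-1}$ and reindex by $g'=hg$ (using that left multiplication by $h$ is a bijection of $G$), obtaining $h\,A\,h^{-1}=\bigcup_{g'\in G}g'\,S_{h^{-1}g'}\,(g')^{-1},$ which is again of the required form since $g'\mapsto S_{h^{-1}g'}$ is a $G$-tuple of members of $F.$

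The equivalence of convergence is the heart of the lemma, though still short. For the implication that $F$ convergent forces $F^G$ convergent, let $N$ be any $\T$-neighborhood of $e.$ Since $\T$ is a group topology, conjugation by a fixed $g\in G$ is a homeomorphism of $G$ fixing $e,$ so $g^{-1}Ng$ is again a neighborhood of $e;$ as $F$ converges to $e$ we may pick $S_g\in F$ with $S_g\subseteq g^{-1}Ng,$ i.e.\ $g\,S_g\,g^{-1}\subseteq N.$ Then $\bigcup_{g\in G}g\,S_g\,g^{-1}$ is a member of $F^G$ contained in $N,$ proving $F^G$ converges to $e.$ For the converse, given a neighborhood $N$ of $e,$ choose $A=\bigcup_{g\in G}g\,S_g\,g^{-1}\in F^G$ with $A\subseteq N;$ the $g=e$ term of this union shows $S_e\subseteq A\subseteq N$ with $S_e\in F,$ so $F$ converges to $e.$

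I do not expect a genuine obstacle here; the only point needing any care is the standard fact that in a topological group conjugation by a fixed element is a homeomorphism (being a composite of inversion and of multiplication with fixed arguments), which is what lets us pull $N$ back to a neighborhood of $e$ in the first half of the convergence argument. Everything else is bookkeeping with the index $g,$ and the asymmetry between the two halves of that argument — choosing an $S_g$ for every $g$ in one direction, but only reading off the $g=e$ coordinate in the other — is exactly what makes the equivalence go through.
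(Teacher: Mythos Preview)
Your proof is correct and follows essentially the same approach as the paper's: the paper abbreviates the downward-directedness and conjugation-invariance checks to one-line remarks, while you spell them out, but the convergence argument---reading off the $g=e$ term for one direction and using that $g^{-1}Ng$ is a neighborhood of $e$ to choose the $S_g$ for the other---is identical.
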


\begin{proof}
That $F^G$ is downward directed follows from the fact
that $F$ is, and it is conjugation invariant by construction.
From the fact that each set $\bigcup_{g\in G}\,g\,S_g\,g^{-1}\in F^G$
contains a member of $F,$ namely $S_e,$ it follows that if
$F^G$ converges to $e$ under $\T$ (i.e., if it has members contained
in every $\!\T\!$-neighborhood of $e),$ then so does $F.$

Now suppose, conversely, that $F$ converges to $e$ under $\T,$
and let $S$ be any neighborhood of $e$ in $\T.$
For each $g\in G,$ the set $g^{-1}S\,g$ is also
a neighborhood of $e,$ hence contains some $S_g\in F,$
which is to say that $S$ contains $g S_g g^{-1}.$
Thus $S$ will contain $\bigcup_{g\in G}\,g\,S_g\,g^{-1}\in F^G;$
so $F^G$ also converges to $e,$ as required.
\end{proof}

Restricting attention to conjugation-invariant families $F,$
we can now give the analog of~(\ref{d.U-basis}).

\begin{proposition}[{cf.\ \cite[Theorem~3.1.4]{P+Z}, \cite[Theorem~1.17]{YZ_bk}}]\label{P.U-basis}
Let $F$ be a downward directed family of nonempty subsets of $G,$
which is closed under conjugation by members of $G.$
Then the sets $U((S_q)_{q\in Q})$
defined by~\textup{~(\ref{d.UQ})}, where
$(S_q)_{q\in Q}$ ranges over all $\!Q\!$-tuples of members of $F,$
form a basis of open neighborhoods of $e$ in a group
topology $\T_F $ on $G,$ which is
the finest group topology under which $F$ converges to $e.$
\end{proposition}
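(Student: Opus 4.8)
The plan is to verify that the family $\mathcal{B}$ of all the sets $U((S_q)_{q\in Q})$, with $(S_q)_{q\in Q}$ ranging over $\!Q\!$-tuples of members of $F,$ satisfies the standard conditions under which a family of subsets of a group is a basis of \emph{open} neighborhoods of the identity for a (necessarily unique) group topology (see, e.g., \cite{PJH}): namely, that $\mathcal{B}$ is downward directed, that each member of $\mathcal{B}$ contains $e,$ that for each $U\in\mathcal{B}$ there is a $V\in\mathcal{B}$ with $VV\subseteq U,$ that for each $U\in\mathcal{B}$ and each $x\in U$ there is a $V\in\mathcal{B}$ with $xV\subseteq U,$ that for each $U\in\mathcal{B}$ there is a $V\in\mathcal{B}$ with $V\subseteq U^{-1},$ and that for each $U\in\mathcal{B}$ and each $g\in G$ there is a $V\in\mathcal{B}$ with $gVg^{-1}\subseteq U.$ Once $\T_F$ is obtained in this way, it remains to check that $F$ converges to $e$ under $\T_F,$ and that every group topology under which $F$ converges to $e$ is coarser than $\T_F;$ for the latter I would call on Lemma~\ref{L.S_in_U}.

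I would first record two elementary facts for repeated use. \emph{Monotonicity}: if $S_q\subseteq S'_q$ for all $q\in Q,$ then $S_q^*\subseteq (S'_q)^*,$ hence $U((S_q)_{q\in Q})\subseteq U((S'_q)_{q\in Q});$ combined with downward directedness of $F,$ this immediately gives downward directedness of $\mathcal{B}.$ And: since $Q$ has the order-type of $\mathbb{Q},$ it carries an order-reversing self-bijection $\iota,$ and for each $a\in Q$ there are order-isomorphisms of $Q$ onto the segments $\{q<a\}$ and $\{q>a\}.$ With these facts, most of the conditions are short. We have $e\in U((S_q)_{q\in Q})$ since $e\in S_q^*$ for every $q.$ The inverse condition holds because $(S^*)^{-1}=S^*$ (by~(\ref{d.S^*mult})), so inverting a product $S_{q_0}^*\cdots S_{q_n}^*$ reverses the order of its factors, and reindexing by $\iota$ yields $U((S_q)_{q\in Q})^{-1}=U((S_{\iota(q)})_{q\in Q})\in\mathcal{B}.$ The conjugation condition holds because $(gSg^{-1})^*=g\,S^*g^{-1},$ whence $g\,U((S_q)_{q\in Q})\,g^{-1}=U((g\,S_q\,g^{-1})_{q\in Q}),$ which lies in $\mathcal{B}$ precisely because $F$ is assumed closed under conjugation (applying this with $g^{-1}$ in place of $g$ produces the $V$ required). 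The condition $xV\subseteq U$ — which is also what will make each member of $\mathcal{B}$ open — I would handle by writing $x\in U=U((S_q)_{q\in Q})$ as $x_0\cdots x_n$ with $x_i\in S_{q_i}^*$ and $q_0<\dots<q_n$ (with $n=0,$ $x_0=e$ if $x=e$), taking an order-isomorphism $\theta$ of $Q$ onto $\{q>q_n\},$ and observing that for $V=U((S_{\theta(q)})_{q\in Q})$ every product occurring in $xV$ has indices $q_0<\dots<q_n<\theta(p_0)<\dots<\theta(p_m),$ hence occurs in $U.$

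The one condition that needs a genuine idea is $VV\subseteq U,$ and I expect this to be the (mild) crux of the argument. Here I would fix $a\in Q$ and take $\sigma,\tau\colon Q\to Q$ to be order-isomorphisms onto $\{q<a\}$ and onto $\{q>a\},$ so that $\sigma(q)<\tau(q')$ for all $q,q'\in Q;$ Lemma~\ref{L.UU} then gives $U((S_{\sigma(q)})_{q\in Q})\,U((S_{\tau(q)})_{q\in Q})\subseteq U.$ Since the two factors are, in general, distinct members of $\mathcal{B},$ to obtain a \emph{single} $V\in\mathcal{B}$ lying in both I would use downward directedness of $F$ to choose, for each $q,$ an $S'_q\in F$ with $S'_q\subseteq S_{\sigma(q)}\cap S_{\tau(q)};$ monotonicity then gives $V:=U((S'_q)_{q\in Q})\subseteq U((S_{\sigma(q)})_{q\in Q})$ and $V\subseteq U((S_{\tau(q)})_{q\in Q}),$ so $VV\subseteq U.$ With all the conditions checked, the cited criterion delivers the group topology $\T_F$ with $\mathcal{B}$ as a basis of open neighborhoods of $e.$

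It then remains to note that $F$ converges to $e$ under $\T_F,$ which is clear: any $\!\T_F\!$-neighborhood of $e$ contains some $U((S_q)_{q\in Q})\in\mathcal{B},$ which in turn contains the one-factor product $S_{q^*}^*,$ hence the member $S_{q^*}\in F,$ for any fixed $q^*\in Q.$ Finally, let $\T$ be any group topology under which $F$ converges to $e,$ and $S$ a $\!\T\!$-neighborhood of $e.$ By Lemma~\ref{L.S_in_U} there are $\!\T\!$-neighborhoods $S'_q$ $(q\in Q)$ of $e$ with $U((S'_q)_{q\in Q})\subseteq S;$ replacing each $S'_q$ by a member $S_q\in F$ contained in it (available because $F$ converges to $e$ under $\T)$ and invoking monotonicity, we get $U((S_q)_{q\in Q})\subseteq S,$ so $S$ is a $\!\T_F\!$-neighborhood of $e.$ Since $\T_F$ is translation-invariant, it follows that every $\!\T\!$-open set is $\!\T_F\!$-open, i.e.\ $\T\subseteq\T_F;$ thus $\T_F$ is the finest group topology under which $F$ converges to $e.$
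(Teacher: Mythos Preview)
Your proof is correct and follows essentially the same approach as the paper's: verify the standard neighborhood-basis axioms using the order-theoretic self-similarity of $Q$ (an order-reversing self-bijection for inverses, an order-isomorphism onto a final segment for the $xV\subseteq U$ condition, Lemma~\ref{L.UU} for products), then invoke Lemma~\ref{L.S_in_U} to establish that $\T_F$ is finest. The only differences are cosmetic: you spell out the full list of axioms and explicitly pass from the two-factor inclusion of Lemma~\ref{L.UU} to a single $V$ with $VV\subseteq U$ via downward directedness, whereas the paper leaves that reduction implicit.
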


\begin{proof}
It is easy to see that the family of
sets $U((S_q)_{q\in Q})$ is downward directed,
is closed under conjugation by elements of $G$ (because $F$ is),
is closed under inverses (since for each $q\in Q,$ $(S^*_q)^{-1}=S^*_q,$
hence if we let $\sigma:Q\to Q$ be an order-antiautomorphism, we get
$U((S_q)_{q\in Q})^{-1}=U((S_{\sigma(q)})_{q\in Q})),$
and has the property that each member of the family contains a
product of two other members (by Lemma~\ref{L.UU}).

To conclude that these sets give a basis of open neighborhoods of $e$
in a group topology on $G,$ it remains to show that for every
such set $U((S_q)_{q\in Q})$ and element
$x\in U((S_q)_{q\in Q}),$ there exists another such set
$U((T_q)_{q\in Q})$ with
\begin{equation}\begin{minipage}[c]{35pc}\label{d.xUT}
$x\,U((T_q)_{q\in Q})\ \subseteq\ U((S_q)_{q\in Q}).$
\end{minipage}\end{equation}
To see that this holds, note that by~(\ref{d.UQ}), $x$ lies in a
finite product $S^*_{q_0}\dots\,S^*_{q_n}$ with
$q_0<\dots<q_n\in Q.$
Now $\{q\in Q\mid q_n<q\}$ is an order-isomorphic
copy of $Q;$ let us write it $\tau(Q)$ where $\tau:Q\to Q$
is an isotone map.
Thus, letting $T_q=S_{\tau(q)},$ we get~(\ref{d.xUT}).

So our sets give a basis of open sets for a group topology $\T_F.$
Moreover, $F$ converges to $e$ in this topology, since
each $U((S_q)_{q\in Q})$ contains members of $F;$
indeed, contains each of the $S_q.$

To show that $\T_F$ is the finest group topology on $G$ under
which $F$ converges to $e,$ suppose $\T$ is any such topology.
For every open neighborhood $S$ of $e$ in $\T,$
Lemma~\ref{L.S_in_U} gives us a set
of the form $U((S'_q)_{q\in Q})$ contained in $S,$
with each $S'_q$ an open neighborhood of $e$ under $\T.$
By the assumption that $F$ converges to $e$ under $\T,$ each $S'_q$
contains some $S_q\in F,$ hence
$U((S_q)_{q\in Q})\subseteq U((S'_q)_{q\in Q})\subseteq S$
is a neighborhood of $e$ under $\T_F$
contained in $S;$ so $\T_F$ is at least as fine as $\T.$
\end{proof}

We have thus generalized to nonabelian groups $G$ the concepts and
results on abelian $G$ quoted in~\S\ref{S.intro}
as~(\ref{d.abelian})--(\ref{d.U-basis}).
The definitions and results of our earlier development immediately
following these
(the remaining material in \S\S\ref{S.intro}--\ref{S.peculiar})
go over to the nonabelian case with minimal change.
Indeed, the argument that gave us Corollary~\ref{C.cupcap=0}, applied
to Proposition~\ref{P.U-basis}, gives

\begin{corollary}\label{C.cupcap=0_nonab}
If $F$ is a conjugation-invariant downward directed
family of subsets of $G,$ then a
{\em necessary} condition for the topology $\T_F$ to be Hausdorff is
\begin{equation}\begin{minipage}[c]{35pc}\label{d.cupcap=e}
$\bigcup_{n>0}\,\bigcap_{S\in F_{\strt}}\,(S^*)^n\ =\ \{e\}.$
In other words, for every $g\in G-\{e\}$ and every $n>0,$
there exists $S\in F$ with $g\notin (S^*)^n.$\qed
\end{minipage}\end{equation}
\end{corollary}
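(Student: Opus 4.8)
The plan is to reproduce the proof of Corollary~\ref{C.cupcap=0} almost verbatim, making two substitutions: the neighborhood basis $\{U(S_0,S_1,\dots)\}$ used there is replaced by the basis $\{U((S_q)_{q\in Q})\}$ supplied by Proposition~\ref{P.U-basis}, and finite sums $S_0^*+\dots+S_{n-1}^*$ are replaced by finite products $S^*_{q_1}\cdots S^*_{q_n}$.

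First I would record the nonabelian form of the Hausdorffness criterion. Since $\T_F$ is a group topology, it is Hausdorff exactly when it is $\mathrm{T}_0$, which for a topological group means $\overline{\{e\}}=\{e\}$; and $\overline{\{e\}}$ equals the intersection of all $\!\T_F\!$-neighborhoods of $e$, which by Proposition~\ref{P.U-basis} are precisely the sets containing some $U((S_q)_{q\in Q})$. Hence $\T_F$ is Hausdorff if and only if
\[
\bigcap_{(S_q)_{q\in Q}}\,U((S_q)_{q\in Q})\ =\ \{e\},
\]
the intersection taken over all $\!Q\!$-tuples of members of $F$. (This is the analog of~(\ref{d.capU=0}); its justification goes over from the abelian case without change, as noted in the paragraph preceding the corollary.)

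Now I would assume this identity, and fix $g\in G-\{e\}$ and $n>0$. Choose a $\!Q\!$-tuple $(S_q)_{q\in Q}$ with $g\notin U((S_q)_{q\in Q})$, and pick a strictly increasing sequence $q_1<\dots<q_n$ of $n$ elements of $Q$ --- possible since $Q$ is infinite. By~(\ref{d.UQ}), the product $S^*_{q_1}\cdots S^*_{q_n}$ is one of the sets whose union is $U((S_q)_{q\in Q})$, so $g\notin S^*_{q_1}\cdots S^*_{q_n}$. Let $S\in F$ be a common lower bound of $S_{q_1},\dots,S_{q_n}$, which exists because $F$ is downward directed; then $S\subseteq S_{q_i}$ forces $S^*\subseteq S^*_{q_i}$ for each $i$, whence $(S^*)^n\subseteq S^*_{q_1}\cdots S^*_{q_n}$, and therefore $g\notin(S^*)^n$. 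Since $g$ and $n$ were arbitrary, this is exactly~(\ref{d.cupcap=e}).

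I do not expect any real obstacle; the argument is a direct transcription of the abelian one. The two points needing a word of care are: (i) deducing the displayed criterion from Proposition~\ref{P.U-basis} uses the fact (quoted earlier) that a topological group is Hausdorff iff it is $\mathrm{T}_0$; and (ii) the hypothesis that $F$ be conjugation-invariant is exactly what permits Proposition~\ref{P.U-basis} to be invoked --- for a general downward directed $F$ one would first pass to $F^G$ via Lemma~\ref{L.F^G}, but here that is unnecessary.
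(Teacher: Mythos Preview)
Your proposal is correct and follows exactly the approach the paper indicates: the paper gives no separate proof, merely noting that ``the argument that gave us Corollary~\ref{C.cupcap=0}, applied to Proposition~\ref{P.U-basis}, gives'' the result, and your write-up is precisely that transcription. Your added remarks on points (i) and (ii) are accurate and make explicit what the paper leaves implicit.
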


The analogs of Definitions~\ref{D.strong} and~\ref{D.self-indulgent}
are

\begin{definition}\label{D.strong,_SI_nonab}
If $F$ is a downward directed family of subsets of $G,$
and $(x_i)_{i\in I}$ a family of elements of $G$ indexed by a
downward directed partially ordered set $I,$
we shall say that $(x_i)$ \emph{converges strongly} to
an element $x\in G$ with respect to $F$ if for every $S\in F,$
there exists $i\in I$ such that for all $j\leq i,$ $x_j\,x^{-1}\in S^*.$

A downward directed family $F$ of subsets of $G$
will be called \emph{self-indulgent} if for
every $S\in F,$ and every family $(x_T)_{T\in F'}$ of elements
of $S^*$ indexed by a downward cofinal
subset $F'\subseteq F,$ there exist an $x\in S^*$
and a downward cofinal subset $F''\subseteq F'$ such that
$(x_T)_{T\in F''}$ converges strongly to $x$ with respect to $F.$
\end{definition}

(The above definition of strong convergence is not right-left symmetric,
since it uses $x_j\,x^{-1}$ rather than $x^{-1}x_j.$
However, since the family of elements $(x_j\,x^{-1})_{j\in I}$
is conjugate, by $x,$ to $(x^{-1}\,x_j)_{j\in I},$
one can deduce that if $F$ is closed under conjugation by members
of $G,$ the condition becomes symmetric.)

The proof that co-$\!\kappa\!$
filters are self-indulgent also goes over with no change.
We state this below, along with another fact,
immediate to verify, that we will need.

\begin{lemma}\label{L.Fr-self-ind_nonab}
Let $X$ be any infinite subset of $G,$ and $\kappa$ any regular
cardinal $\leq\card(X).$
Then the co-$\!\kappa\!$ filter $F$ on $X$ is self-indulgent.

Moreover, if $X$ is invariant under conjugation by
elements of $G,$ then that filter $F$ is likewise closed under
conjugation by elements of $G.$\qed
\end{lemma}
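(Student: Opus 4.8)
The plan is to observe that the first assertion follows by transcribing the proof of Lemma~\ref{L.Fr-self-ind} verbatim, with the group written multiplicatively and the difference $x-x_T$ in Definition~\ref{D.strong} replaced by the quotient $x_T\,x^{-1}$ appearing in Definition~\ref{D.strong,_SI_nonab}; the second assertion is immediate from the definition of the co-$\!\kappa\!$ filter. I will spell out the (purely notational) changes.

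For self-indulgence, let $S\in F$ and let $(x_T)_{T\in F'}$ be a family of elements of $S^*$ indexed by a downward cofinal subset $F'\subseteq F$. If some $x\in S^*$ occurs \emph{frequently}, meaning that $\{T\in F'\mid x_T=x\}$ is downward cofinal in $F'$, then taking this $x$ and $F''=\{T\in F'\mid x_T=x\}$ we have $x_T\,x^{-1}=e$ for all $T\in F''$, and $e\in R^*$ for every $R\in F$, so $(x_T)_{T\in F''}$ converges strongly to $x$ with respect to $F$. Otherwise, I claim one may take $F''=F'$ and $x=e$. Indeed, writing $R$ for the member of $F$ named in the definition of strong convergence, we have $\card(S^*-R^*)<\kappa$ (since $S$ and $R$ are complements in $X$ of sets of cardinality $<\kappa$, and forming $(\,\cdot\,)^*$ only adjoins $\{e\}$ and inverses); for each $s\in S^*-R^*$, the fact that $s$ does not occur frequently provides a $T_s\in F'$ such that no $x_T$ with $T\subseteq T_s$ in $F'$ equals $s$; the set $T_0=\bigcap_{s\in S^*-R^*}T_s$ lies in $F$ by regularity of $\kappa$; hence, by downward cofinality of $F'$ in $F$, there is some $T_R\subseteq T_0$ in $F'$; and for every $T\subseteq T_R$ in $F'$ the element $x_T\in S^*$ cannot equal any $s\in S^*-R^*$, so $x_T\in R^*$, i.e.\ $x_T\,e^{-1}\in R^*$. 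Thus $(x_T)_{T\in F'}$ converges strongly to $e$ with respect to $F$, proving self-indulgence.

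For the conjugation claim, a typical member of the co-$\!\kappa\!$ filter $F$ is $X-A$ with $\card(A)<\kappa$, and for $g\in G$ the hypothesis that $X$ is conjugation-invariant gives
\[
g\,(X-A)\,g^{-1}\ =\ gXg^{-1}-gAg^{-1}\ =\ X-gAg^{-1},
\]
with $\card(gAg^{-1})=\card(A)<\kappa$ because conjugation by $g$ is a bijection of $G$; hence $g\,(X-A)\,g^{-1}\in F$, so $F$ is closed under conjugation. Since the whole argument merely replays Lemma~\ref{L.Fr-self-ind}, there is no real obstacle; the only things to watch are the two notational substitutions above — harmless here because in each case the quotient $x_T\,x^{-1}$ is either $e$ or equals $x_T$ — and, as in the abelian case, the appeal to regularity of $\kappa$ that keeps $T_0$ inside $F$.
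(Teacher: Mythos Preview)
Your proposal is correct and matches the paper's approach exactly: the paper simply remarks that the proof of Lemma~\ref{L.Fr-self-ind} ``goes over with no change'' and that the conjugation-invariance claim is ``immediate to verify,'' then states the lemma with a \qed. Your write-up faithfully spells out those two observations.
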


We now come to the analogs of the material of \S\ref{S.main}.
A little care is needed in generalizing Lemma~\ref{L.one-more},
though the ideas are the same.

\begin{lemma}\label{L.one-more_nonab}
Let $F$ be a self-indulgent downward directed system of subsets of
$G$ which is closed under conjugation by members of $G,$
and satisfies~\textup{(\ref{d.cupcap=e})}.
Let $g\in G,$ and suppose that for some $n\geq 0$
and $0\leq m\leq n,$ $S_0,\dots,S_{m-1},\,S_{m+1},\dots,S_n$
are members of $F$ such that
\begin{equation}\begin{minipage}[c]{35pc}\label{d.n-1_nonab}
$g\notin S^*_0 \dots S^*_{m-1}\,S^*_{m+1} \dots S^*_n.$
\end{minipage}\end{equation}
Then there exists $S_m\in F$ such that
\begin{equation}\begin{minipage}[c]{35pc}\label{d.n_nonab}
$g\notin S^*_0 \dots S^*_{m-1}\,S^*_m\ S^*_{m+1} \dots S^*_n.$
\end{minipage}\end{equation}
\end{lemma}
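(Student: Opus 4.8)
The plan is to follow the proof of Lemma~\ref{L.one-more} almost verbatim, threading conjugations through it to compensate for noncommutativity. Suppose the conclusion fails, so that (\ref{d.n_nonab}) fails for every $T\in F$ in place of $S_m$. Then for each $T\in F$ we may choose $g_{i,T}\in S_i^*$ for $i\in\{0,\dots,n\}\setminus\{m\}$ and $g_{m,T}\in T^*$ with $g=g_{0,T}\,g_{1,T}\cdots g_{n,T}$. The one genuinely new feature — the ``care'' alluded to in the text — is that the varying factor $g_{m,T}$ now sits in the \emph{middle} of this product rather than at its end, so it is the factor at position $m,$ not the last factor, that plays the role of the ``free'' term.

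First I would extract limit values exactly as in Lemma~\ref{L.one-more}. Running through the indices $i\neq m$ in increasing order and applying self-indulgence of $F$ to the family $(g_{i,T}),$ indexed at each stage by the downward cofinal subset of $F$ produced at the previous stage, I obtain an element $g_i\in S_i^*$ and a smaller downward cofinal subset; at $i=m$ I do nothing, setting $g_m=e,$ since $(g_{m,T})_{T\in F}$ already converges strongly to $e$ with respect to $F$ (as $g_{m,T}\in T^*$), and this persists on passing to any downward cofinal subset. Writing $F^\flat$ for the last downward cofinal subset so produced, over $F^\flat$ the family $(g_{i,T})$ converges strongly to $g_i$ for every $i,$ with $g_i\in S_i^*$ for $i\neq m$ and $g_m=e.$

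Next, set $a=g_0g_1\cdots g_n$ and $g'=g\,a^{-1}.$ Since $g_m=e,$ we have $a\in S_0^*\cdots S_{m-1}^*\,S_{m+1}^*\cdots S_n^*,$ so (\ref{d.n-1_nonab}) forces $g\neq a,$ i.e.\ $g'\neq e.$ Put $h_{i,T}=g_{i,T}\,g_i^{-1},$ so that $(h_{i,T})_{T\in F^\flat}$ converges strongly to $e$ with respect to $F$ for each $i$ (and $h_{m,T}=g_{m,T}\in T^*$). A routine rearrangement of $g=(h_{0,T}g_0)(h_{1,T}g_1)\cdots(h_{n,T}g_n),$ sliding each $g_i$ rightward past the later factors, gives, for every $T\in F^\flat,$
\[ g'\;=\;h_{0,T}\cdot{}^{g_0}h_{1,T}\cdot{}^{g_0g_1}h_{2,T}\cdots{}^{g_0\cdots g_{n-1}}h_{n,T}, \]
where ${}^{c}x$ denotes $cxc^{-1}.$ Each of these $n+1$ factors is a conjugate of some $h_{i,T}$ by a \emph{fixed} element $c=g_0\cdots g_{i-1};$ and since $F$ is closed under conjugation, $c^{-1}Rc\in F$ and $(c^{-1}Rc)^*=c^{-1}R^*c$ for every $R\in F,$ so strong convergence of $(h_{i,T})$ to $e$ with respect to $F$ carries over to strong convergence of $({}^{c}h_{i,T})$ to $e$ with respect to $F.$

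Finally, I would invoke the hypothesis (\ref{d.cupcap=e}): since $g'\neq e,$ there is $S\in F$ with $g'\notin(S^*)^{n+1}.$ For each of the $n+1$ factors, strong convergence to $e$ supplies a $T_i\in F^\flat$ below which that factor lies in $S^*;$ taking $T\in F^\flat$ with $T\subseteq T_i$ for every $i$ (possible since $F^\flat$ is downward directed), all $n+1$ factors evaluated at this $T$ lie in $S^*,$ whence $g'\in(S^*)^{n+1}$ — a contradiction. I do not expect any real conceptual obstacle here, since this is the abelian argument of Lemma~\ref{L.one-more} with conjugations inserted; the only thing needing attention is the bookkeeping imposed by the interior position $m$ and by the conjugating prefixes $g_0\cdots g_{i-1},$ which is exactly where closure of $F$ under conjugation earns its keep.
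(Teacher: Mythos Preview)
Your proof is correct and essentially identical to the paper's own argument: both assume the conclusion fails, choose factorizations $g=g_{0,T}\cdots g_{n,T}$ with $g_{m,T}\in T^*$, apply self-indulgence $n$ times to extract limits $g_i\in S_i^*$ (with $g_m=e$) on a cofinal $F'\subseteq F$, set $g'=g\,(g_0\cdots g_n)^{-1}\neq e$, and then rewrite $g'$ as a product of the conjugates $(g_0\cdots g_{i-1})\,h_{i,T}\,(g_0\cdots g_{i-1})^{-1}$, using conjugation-closure of $F$ to conclude each factor converges strongly to $e$ and contradict~(\ref{d.cupcap=e}). The only differences are notational (your $h_{i,T}$, $F^\flat$, $a$ correspond to the paper's $g'_{i,T}$, $F'$, $g_0\cdots g_n$).
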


\begin{proof}
As before, the contrary assumption says that
for each $T\in F,$ we can choose $n+1$ elements
\begin{equation}\begin{minipage}[c]{35pc}\label{d.nth_nonab}
$g_{0,T}\in S_0^*,\ \ \dots,\ \ g_{m{-}1,T}\in S^*_{m-1},
\ \ g_{m,T}\in T,\ \ g_{m{+}1,T}\in S^*_{m+1},\ \ \dots,
\ \ g_{n,T}\ \in\ S_n^*$
\end{minipage}\end{equation}
(note how the $\!m\!$-th condition differs from the others), such that
\begin{equation}\begin{minipage}[c]{35pc}\label{d.=g_nonab}
$g_{0,T}\,\dots\,g_{m{-}1,T}\ \,g_{m,T}\ \,g_{m{+}1,T}\,\dots
\,g_{n,T}\ =\ g.$
\end{minipage}\end{equation}
(However, in writing expressions like the
above, we will, from this point on, omit the terms indexed by $m-1$ and
$m+1,$ and only show those indexed by $0,$ $m$ and $n.)$

Making $n$ successive applications of our self-indulgence
assumption on $F$ (we did these from left to right in
proving Lemma~\ref{L.one-more}; but the order makes no difference),
we can get elements
\begin{equation}\begin{minipage}[c]{35pc}\label{d.hi}
$g_i$ for $0\leq i\leq n,$ where for $i\neq m,$ $g_i\in S_i^*,$
while $g_m=e,$
\end{minipage}\end{equation}
and a cofinal subfamily $F'\subseteq F,$
such that for each $i,$ the family $(g_{i,T})_{T\in F'}$
converges strongly to $g_i$ with respect to $F.$
Defining
\begin{equation}\begin{minipage}[c]{35pc}\label{d.g'i}
$g'_{i,T}\ =\ g_{i,T}\,g_i^{-1},$
\end{minipage}\end{equation}
we conclude that
\begin{equation}\begin{minipage}[c]{35pc}\label{d.g'->e}
for each $i\in\{0,\dots,n\},$
the family of elements $(g'_{i,T})_{T\in F'}$ converges
strongly to $e$ with respect to $F.$
\end{minipage}\end{equation}

Now~(\ref{d.n-1_nonab}) and the condition in~(\ref{d.hi})
imply that $g\neq g_0\dots g_m\dots g_n,$ so let us write
\begin{equation}\begin{minipage}[c]{35pc}\label{d.g'neqe}
$g'\ =\ g\cdot(g_0\,\dots\,g_m\,\dots\,g_n)^{-1}\ \neq\ e.$
\end{minipage}\end{equation}
Since $F$ satisfies~(\ref{d.cupcap=e}), we can find $S\in F$ such that
\begin{equation}\begin{minipage}[c]{35pc}\label{d.g'notin_nonab}
$g'\ \notin\ (S^*)^{n+1}.$
\end{minipage}\end{equation}

On the other hand, note that if in the right-hand side
of~(\ref{d.g'neqe}) we expand the initial factor $g$
using~(\ref{d.=g_nonab}), and then use~(\ref{d.g'i})
to rewrite each of the resulting factors $g_{i,T}$
as $g'_{i,T}\,g_i,$ we get
\begin{equation}\begin{minipage}[c]{35pc}\label{d.new=g_nonab}
$g'\ =\ (g'_{0,T}\,g_0)\,\dots\,(g'_{m,T}\,g_m)\,\dots\,
(g'_{n,T}\,g_n)\,
(g_0\,\dots\,g_m\,\dots\,g_n)^{-1}$ for all $T\in F'.$
\end{minipage}\end{equation}
Letting $h_i=g_0\dots g_{i-1}$ for $0\leq i\leq n,$ this becomes
\begin{equation}\begin{minipage}[c]{35pc}\label{d.cj}
$g'\ =\ (h_0\,g'_{0,T}\,h_0^{-1})\,\dots\,
(h_m\,g'_{m,T}\,h_m^{-1})\,\dots\,
(h_n\,g'_{n,T}\,h_n^{-1})$ for all $T\in F'.$
\end{minipage}\end{equation}
From the facts that the $g'_{i,T}$ all converge strongly
to $e$ with respect to $F,$ and that $F$ is closed
under conjugation by members of $G,$ it follows that
in~(\ref{d.cj}), each of the factors $h_i\,g'_{i,T}\,h_i^{-1}$
converges strongly to $e.$
Hence for some $T\in F',$ all the factors
of~(\ref{d.cj}) lie in the $S^*$ of~(\ref{d.g'notin_nonab}).
That instance of~(\ref{d.cj}) therefore
contradicts~(\ref{d.g'notin_nonab}), completing the proof of the lemma.
\end{proof}

Given $F$ as in the above lemma, and any $g\in G-\{e\},$ we can use
that lemma to build up, by recursion with respect to
any enumeration of $Q$ by the natural numbers,
systems $(S_q)_{q\in Q}$ such that $g\notin U(S_q)_{q\in Q}.$
We deduce

\begin{theorem}\label{T.main_nonab}
Let $F$ be a downward directed
system of subsets of $G$ which is self-indulgent, and
closed under conjugation by all elements of $G.$
\textup{(}In particular,
by Lemma~\ref{L.Fr-self-ind_nonab} this is true if for
some conjugation-invariant $X\subseteq G$ and some $\kappa\leq\card(X),$
$F$ is the co-$\!\kappa\!$ filter on $X.)$
Then the finest group topology on $G$ under which $F$ converges to $e$
is Hausdorff if and only if $F$
satisfies~\textup{(\ref{d.cupcap=e})}.\qed
\end{theorem}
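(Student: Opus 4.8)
The plan is to reproduce, in the present setting, the proof of Theorem~\ref{T.main}, with Proposition~\ref{P.U-basis} playing the role of~(\ref{d.U-basis}), Corollary~\ref{C.cupcap=0_nonab} the role of Corollary~\ref{C.cupcap=0}, and Lemma~\ref{L.one-more_nonab} the role of Lemma~\ref{L.one-more}. The forward direction is immediate: since $F$ is conjugation-invariant, Corollary~\ref{C.cupcap=0_nonab} applies and shows that~(\ref{d.cupcap=e}) is necessary for $\T_F$ to be Hausdorff. For the converse, assume~(\ref{d.cupcap=e}). Just as~(\ref{d.U-basis}) makes abelian $\T_F$ Hausdorff precisely when~(\ref{d.capU=0}) holds, Proposition~\ref{P.U-basis} makes $\T_F$ here $\mathrm{T}_0$ — equivalently Hausdorff — precisely when the intersection of all the basic neighborhoods $U((S_q)_{q\in Q})$ of $e$, as $(S_q)_{q\in Q}$ ranges over $Q$-tuples of members of $F$, equals $\{e\}$. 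Hence it suffices, given an arbitrary $g\in G-\{e\}$, to construct a single $Q$-tuple $(S_q)_{q\in Q}$ of members of $F$ with $g\notin U((S_q)_{q\in Q})$.

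To build such a tuple, fix (using countability of $Q$) an enumeration $Q=\{r_0,r_1,\dots\}$, and define the sets $S_{r_N}\in F$ by recursion on $N$, maintaining the invariant that, writing $q_0<\dots<q_{N-1}$ for the increasing rearrangement of $\{r_0,\dots,r_{N-1}\}$, one has $g\notin S^*_{q_0}\dots S^*_{q_{N-1}}$. This holds vacuously at $N=0$. At stage $N$, insert $r_N$ into the sorted list $q_0<\dots<q_{N-1}$; it occupies some position $m$ (between two consecutive old indices, or at one end). Re-indexing the augmented list of $N+1$ indices as positions $0,\dots,N$, we are exactly in the hypothesis of Lemma~\ref{L.one-more_nonab} with its ``$n$'' equal to $N$ and its ``$m$'' equal to the position just found: condition~(\ref{d.n-1_nonab}) is the stage-$N$ invariant. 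The lemma then furnishes a set $S_m\in F$, which we take to be $S_{r_N}$; the product of the resulting $N+1$ sets $S^*_q$ $(q\in\{r_0,\dots,r_N\})$, taken in increasing order of $q$, then excludes $g$, which is the stage-$(N+1)$ invariant.

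Once all the $S_q$ $(q\in Q)$ have been defined, consider any finite increasing sequence $q_0<\dots<q_n$ in $Q$. All of its terms lie in $\{r_0,\dots,r_M\}$ for some $M$, hence form an increasing subsequence of the sorted list of that set; since each $S^*_q$ contains $e$, one may pad with copies of $e$ to see that $S^*_{q_0}\dots S^*_{q_n}$ is contained in the product over the full sorted list of $\{r_0,\dots,r_M\}$, which by the stage-$(M+1)$ invariant omits $g$. So $g$ lies in no such finite product, whence by the definition~(\ref{d.UQ}) of $U((S_q)_{q\in Q})$ we get $g\notin U((S_q)_{q\in Q})$, as desired, and the theorem follows. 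The one point requiring care, and the reason Lemma~\ref{L.one-more_nonab} was set up to fill a \emph{hole} at an arbitrary position $m$ rather than merely to append a new final factor, is that $Q$ is densely ordered, so in the recursion the new index $r_N$ will in general fall strictly between previously chosen ones; it is this middle-insertion form of the recursive step — together with the elementary observation that $e\in S^*$ makes products over subsequences into subsets, so a single invariant on the sorted list controls all finite increasing products at once — that makes the argument go through. Everything else is the bookkeeping already carried out in the abelian case.
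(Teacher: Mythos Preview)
Your proof is correct and is exactly the approach the paper takes: the paper merely sketches (in the paragraph preceding the theorem) that one builds $(S_q)_{q\in Q}$ by recursion along an enumeration of $Q$ using Lemma~\ref{L.one-more_nonab}, while you have spelled out the invariant and the final verification in full. Your closing remark on why the lemma must allow a hole at an arbitrary position $m$ (because $Q$ is densely ordered) makes explicit the motivation that the paper leaves implicit.
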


It is not clear to me how closely related this is to
the nearest result in \cite{P+Z}, Theorem~3.2.1.
That result is restricted to countable groups $G,$ but concerns
the finest group topology under which a general sequence (equivalently,
the cofinite (i.e., co-$\!\aleph_0)\!$ filter on a general subset,
{\em not} necessarily conjugation invariant) converges.
The criterion given for that topology to be Hausdorff uses,
in place of the $\!n\!$-fold products
implicit in~(\ref{d.cupcap=e}), arbitrary group words
$f(x_0,\dots,x_n)$ in $n+1$ variables, and
constants from $G,$ which satisfy $f(e,\dots,e)=e.$
These two sorts of expressions ultimately reduce to the same thing;
but the quantification of the conditions is subtly different.
Perhaps this is not surprising:~(\ref{d.capU=0})
and~(\ref{d.cupcap=0}) can also be looked at as similar
conditions which involve different quantifications, but which become
equivalent in the case of self-indulgent~$F.$

In \cite[\S\S3.3,~3.4]{P+Z}, topologies on {\em rings} determined
by families of subsets are similarly studied.

\section{A Fibonacci connection}\label{S.Fib}

Many interesting applications are given in \cite{P+Z} of the criterion
obtained there for the cofinite filter on a
countable subset of an
abelian group to converge to $0$ in a Hausdorff group topology.
In particular, it is shown that there exist such topologies on $\Z$
under which various integer sequences -- for instance the
Fibonacci sequence \cite[Corollary~2.2.8]{P+Z} --
converge to $0.$

Note that in the nonabelian free group $G=\lang\,x,y\,\rang$
on two generators, one can define a Fibonacci-like sequence by
\begin{equation}\begin{minipage}[c]{35pc}\label{d.Fib}
$f_{0}=x,\quad f_{1}=y,\quad f_{n+1}=f_{n-1}f_n\quad (n\in\Z).$
\end{minipage}\end{equation}
I had hopes of proving that there was
a Hausdorff group topology on $\lang\,x,y\,\rang$ under which this
sequence converged to $e.$
However, if we define an automorphism $\varphi$
of $\lang\,x,y\,\rang$ by $\varphi(x)=y,$ $\varphi(y)=xy,$
then we see that in~(\ref{d.Fib}), $f_n=\varphi^n(x);$ so the result I
hoped for would imply that every $g\in \lang\,x,y\,\rang$ satisfied
$\lim_{n\to\infty}\varphi^n(g)=e.$
But calculation shows that the commutator $x\,y\,x^{-1} y^{-1}$ is
fixed by $\varphi^2;$ so this cannot be true.
Indeed, there cannot even exist a Hausdorff group topology under
which the sequence $f_n$ approaches
some fixed element $c$ of $G,$ or of a topological overgroup of $G,$
since then we would have
\begin{equation}\begin{minipage}[c]{35pc}\label{x.[]=1}
$\varphi^n(x\,y\,x^{-1} y^{-1})=
\varphi^n(x)\,\varphi^{n+1}(x)\,\varphi^n(x)^{-1} \varphi^{n+1}(x)^{-1}
\to\,c\,c\,c^{-1} c^{-1}=e,$
\end{minipage}\end{equation}
though as noted, the left-hand side has, for every
even $n,$ the value $x\,y\,x^{-1} y^{-1}.$
However, I don't see any obstruction to there being a topological
overgroup of $G$ under which the values of $f_{2n}$
and $f_{2n+1}$ each approach constant values.

For another context in which the ``Fibonacci automorphism'' $\varphi$
of $\lang\,x,y\,\rang$ (there called $\sigma^{1/2})$
comes up, see~\cite{submonoid}.

\section{Final remark, and acknowledgements}\label{S.thanks}

I do not know of interesting applications of the results of this note.
My motivation has been structural:
``What ideas underlie the arguments of~\cite{P+Z};
and in what more general contexts are those ideas applicable?''
Perhaps group theorists will find such applications.

I am indebted to Dikran Dikranjan, Pace Nielsen, Igor V.\,Protasov,
K.\,M.\,Rangaswamy and Yevhen Zelenyuk
for helpful comments and corrections to previous versions of this
note, and for references to the literature.

\end{document}